\documentclass[a4paper,10pt,twoside]{article}
\usepackage{pst-all}
\usepackage{graphicx}
\usepackage{fancyhdr}
\usepackage{amsmath}
\usepackage{epsfig} 
\usepackage{psfrag} 
\usepackage{amsmath,amsfonts,amssymb,amsthm} 
\usepackage{bbm} 


\newtheorem{Theorem}{Theorem}[section] 
\newtheorem{Corollary}[Theorem]{Corollary} 
\newtheorem{Lemma}[Theorem]{Lemma}

\renewcommand{\proofname}{Proof} 
\theoremstyle{definition} 

%


\newcommand{\CCPPst}{P_{s,t-\mbox{\scriptsize path}}^{c}(D)}
\newcommand{\CCPPstl}{P_{s,t-\mbox{\scriptsize path}}^{(c_1)}(D)}
\newcommand{\CCPPstu}{P_{s,t-\mbox{\scriptsize path}}^{(c_m)}(D)}
\newcommand{\CCPPon}{P_{0,n-\mbox{\scriptsize path}}^c(\tilde{D}_n)}
\newcommand{\tdn}{\tilde{D}_n}
\newcommand{\tnn}{\tilde{N}_n}
\newcommand{\tann}{\tilde{A}_n}
\newcommand{\OddPath}{P_{0,n-\mbox{\scriptsize path}}^{\mbox{odd}}(D)}
\newcommand{\EvenPath}{P_{0,n-\mbox{\scriptsize path}}^{\mbox{even}}(D)}
\newcommand{\CCPPonl}{P_{0,n-\mbox{\scriptsize path}}^{(c_1)}(D)}
\newcommand{\CCPPkon}{P_{0,n-\mbox{\scriptsize path}}^{(k)}(D)}
\newcommand{\CCCP}{P_C^{c}(D)}
\newcommand{\CCCPn}{P_C^{c}(D_n)}
\newcommand{\uCCPPst}{P_{s,t-\mbox{\scriptsize path}}^{c}(G)}
\newcommand{\uCCPPon}{P_{0,n-\mbox{\scriptsize path}}^{c}(K_{n+1})}
\newcommand{\uCCCP}{P_C^{c}(G)}
\newcommand{\uCCCPn}{P_C^{c}(K_n)}
\newcommand{\CHS}{\mbox{CHS}}
\newcommand{\CS}{\mbox{CS}}
\newcommand{\symmetric}{pseudo-symmetric }

\pagestyle{fancy}
\fancyhead{}
\fancyhead[LE,RO]{\thepage}                        

\fancyhead[CO]{On cardinality constrained cycle and path polytopes}
\fancyhead[CE]{Volker Kaibel and R\"udiger Stephan}

\setlength{\oddsidemargin}{2cm}
\setlength{\evensidemargin}{2cm}
\title{On cardinality constrained cycle and path polytopes}
\author{Volker Kaibel and R\"udiger Stephan}
\date{}
\begin{document}

\maketitle

\begin{abstract}
Given a directed graph $D=(N,A)$ and a sequence of positive integers
$1 \leq c_1< c_2< \dots < c_m \leq |N|$, we consider those path and
cycle polytopes that are defined as the convex hulls of simple paths
and cycles of $D$ of cardinality $c_p$ for some $p \in \{1,\dots,m\}$,
respectively. We present integer characterizations of these polytopes
by facet defining linear inequalities for which the separation problem can be solved in polynomial time.
These inequalities can simply be transformed into
inequalities that characterize the integer points of the
undirected counterparts of cardinality constrained path and cycle
polytopes. Beyond we investigate some further inequalities, in
particular inequalities that are specific to odd/even paths and cycles.
\end{abstract}

\section{Introduction}
Let $D=(N,A)$ be a directed graph on $n$ nodes that has
neither loops nor parallel arcs, and let $c =
(c_1,\dots,c_m)$ be a nonempty sequence of integers such that $1 \leq
c_1 < c_2 < \dots < c_m \leq n$ holds. Such a sequence is called a
\emph{cardinality sequence}. 
For two different nodes $s,t \in N$, the \emph{cardinality constrained
  (s,t)-path polytope}, denoted by $\CCPPst$, is the convex hull of the
incidence vectors of simple directed $(s,t)$-paths $P$ such that $|P| = c_p$
holds for some $p \in \{1,\dots,m\}$.
The \emph{cardinality constrained cycle polytope} $\CCCP$, similar
defined, is the convex hull of the incidence vectors of simple
directed cycles $C$ with $|C|=c_p$ for some $p$. Note, since $D$ does not have loops, we may assume
$c_1 \geq 2$ when we investigate cycle polytopes. 
The undirected counterparts of these
polytopes  are defined similarly. We denote them by $\uCCPPst$ and
$\uCCCP$, where $G$ is an undirected graph. The associated polytopes
without cardinality restrictions we denote by $P_{s,t-\mbox{\scriptsize
    path}}(D)$, $P_{s,t-\mbox{\scriptsize path}}(G)$, $P_C(D)$, and $P_C(G)$.

Cycle and path polytopes, with and without cardinality restrictions,
defined on graphs or digraphs, are
already well studied. For a literature survey on these polytopes see
Table 1.

\begin{table}
\caption{\bf Literature survey on path and cycle polyhedra}

\vspace{0.4cm} \noindent
\begin{tabular}{|rl|} \hline 
&\\
Schrijver \cite{Schrijver2003}, chapter 13: & dominant of $P_{s,t-\mbox{\scriptsize path}}(D)$\\
Stephan \cite{Stephan}: & $P_{s,t-\mbox{\scriptsize path}}^{(k)}(D)$\\
Dahl, Gouveia \cite{DG}: & $P_{s,t-\mbox{\scriptsize path}}^{\leq
  k}(D):= P_{s,t-\mbox{\scriptsize path}}^{(1,\dots,k)}(D)$\\
 Dahl, Realfsen \cite{DR}: & $P_{s,t-\mbox{\scriptsize path}}^{\leq
  k}(D)$, $D$ acyclic\\ 
Nguyen \cite{Nguyen}: & dominant of  $P_{s,t-\mbox{\scriptsize path}}^{\leq
  k}(G)$\\
Balas, Oosten \cite{BO}: & directed cycle polytope $P_C(D)$\\
Balas, Stephan \cite{BST}: & dominant of $P_C(D)$\\
Coullard, Pulleyblank \cite{CP}, Bauer \cite{Bauer}: & undirected
cycle polytope $P_C(G)$\\ 
Hartmann, \"Ozl\"uk \cite{HO}: & $P_C^{(k)}(D)$\\
Maurras, Nguyen \cite{MN1,MN2}: &  $P_C^{(k)}(G)$\\
Bauer, Savelsbergh, Linderoth \cite{BLS}: &  $P_C^{\leq k}(G)$\\ [3mm]\hline
\end{tabular}
\end{table}

Those publications that treat cardinality
restrictions, discuss only the cases $\leq k$ or $= k$, while we
address the general case. In particular, we assume $m \geq 2$.
The main contribution of this paper will be the presentation of
IP-models (or IP-formulations) for cardinality constrained path and
cycle polytopes whose inequalities generally define facets with
respect to complete graphs and digraphs. Moreover, the associated separation problem can be solved in polynomial time.

The basic idea of this paper can be presented best for cycle
polytopes. Given a finite set $B$ and a cardinality sequence $b=(b_1,\dots,b_m)$, the set $\CHS^{b}(B):=\{F \subseteq B : |F|=b_p \mbox{ for some
} p\}$ is called a \emph{cardinality homogenous set system}. 
Clearly, $P_C^c(D) = \mbox{conv} \{\chi^C \in \mathbb{R}^A\;|\;
C \mbox{ simple cycle}, \, C \in CHS^c(A)\}$, where $CHS^{c}(A)$ is the cardinality homogeneous
set system defined on the arc set $A$ of $D$. According to Balas and Oosten
\cite{BO}, the integer points of the cycle polytope $P_C(D)$ can be
characterized by the system
\begin{equation} \label{model1}
\begin{array}{rcll}
x(\delta^{\mbox{\scriptsize out}}(i))- x(\delta^{\mbox{\scriptsize in}}(i)) & = & 0   & \mbox{for all } i \in N,\\ 
x(\delta^{\mbox{\scriptsize out}}(i)) & \leq & 1 & \mbox{for all } i \in N,\\
 -  x((S:N \setminus S)) + x(\delta^{\mbox{\scriptsize out}}(i))+ x(\delta^{\mbox{\scriptsize out}}(j)) & \leq & 1&
\mbox{for all }
S \subset N,\\
& & &  2 \leq |S| \leq n-2,\\
& & &  i \in S, j \in N \setminus S,\\
x(A) & \geq & 2,\\
\multicolumn{3}{r}{x_{ij} \in \{0,1\}}  & \mbox{for all } (i,j) \in A.
\end{array}
\end{equation}
Here, $\delta^{\mbox{\scriptsize out}}(i)$ and $\delta^{\mbox{\scriptsize in}}(i)$ denote the set of arcs leaving
and entering node $i$, respectively; for an arc set $F \subseteq A$
we set $x(F):=\sum_{(i,j) \in F} x_{ij}$; for any subsets $S,T$ of 
$N$, $(S:T)$ denotes $\{(i,j) \in A| i \in S, j \in T\}$.
Moreover, for any $S \subseteq N$, we denote by $A(S)$ the subset of arcs whose
both endnodes are in $S$.

Gr\"otschel \cite{Groetschel} presented a complete linear description of
a cardinality homogeneous set system. For $CHS^{c}(A)$, the model reads:
\begin{equation} \label{model2}
\begin{array}{l}
\hspace{3.7cm}
\begin{array}{rcccll}
0   & \leq &  x_{ij}& \leq & 1 & \mbox{for all } (i,j) \in A,\\
c_1 & \leq &  x(A)  & \leq & c_m,
\end{array} \\
\\
(c_{p+1} - |F|) \; x(F)-(|F| - c_p) \; x(A \setminus F) \leq 
c_p(c_{p+1}-|F|) \\
\hspace{1cm} \mbox{for all } F \subseteq A \mbox{ with } c_p < |F| <
c_{p+1} \mbox{ for some } p \in \{1,\dots,m-1\}.
\end{array}
\end{equation}
The \emph{cardinality bounds} $c_1 \leq  x(A) \leq c_m$ exclude
all subsets of $A$ whose cardinalities are out of the bounds
$c_1$ and $c_m$, while the latter class of inequalities of model
(\ref{model2}), which are called \emph{cardinality forcing
  inequalities}, cut off all 
arc sets $F \subseteq A$ of forbidden cardinality between the bounds, since for each
such $F$, the cardinality forcing inequality associated with $F$ is
violated by $\chi^F$:
\[
(c_{p+1}-|F|) \chi^F(F)-(|F|-c_p)\chi^F(A \setminus F)=
|F|(c_{p+1}-|F|)> c_p(c_{p+1}-|F|).
\] 
However, for any $H \in CHS^{c}(A)$ the inequality associated with $F$ is valid. If $|H| \leq c_p$,  
then $(c_{p+1}-|F|) \chi^H(F)-(|F|-c_p)\chi^H(A \setminus F) \leq
(c_{p+1}-|F|) x(H \cap F) \leq c_p (c_{p+1}-|F|)$, and equality holds if $|H|=c_p$ and $H \subseteq F$.
If $|H| \geq c_{p+1}$, then  $(c_{p+1}-|F|) \chi^H(F)-(|F|-c_p)\chi^H(A \setminus F) \leq |F|(c_{p+1}-|F|) -  (c_{p+1}-|F|)(|F|-c_p) = c_p (c_{p+1}-|F|)$, and equality holds if $|H|=c_{p+1}$ and $H \cap F = F$.

Combining both models results obviously in an integer characterization for the cardinality
constrained cycle polytope $\CCCP$. However, the cardinality forcing
inequalities in this form are quite weak, that is, they define very
low dimensional faces of $\CCCP$. 
The key for obtaining stronger cardinality forcing inequalities for $P_C^c(D)$ is to count the nodes of a cycle rather than its arcs. The trivial, but crucial observation here is that, for the incidence vector $x\in\{0,1\}^A$ of a cycle in~$D$ and for every node~$i\in V$, we have $x(\delta^{\mbox{\scriptsize out}}(i))=1$ if  the cycle contains node~$i$, and $x(\delta^{\mbox{ \scriptsize out}}(i))=0$ if it does not. Thus, for every $W\subseteq N$ with $c_p<|W|<c_{p+1}$ for some $p\in\{1,\dots,m-1\}$,
the  cardinality-forcing inequality 
\begin{equation*}
   (c_{p+1} - |W|) \sum_{i \in W} x(\delta^{\mbox{\scriptsize out}}(i)) 
   - (|W| - c_p) \sum_{i \in N \setminus W} x(\delta^{\mbox{\scriptsize out}}(i)) 
   \leq c_p(c_{p+1} - |W|),
\end{equation*}
is valid for $\CCCP$, cuts
off all cycles $C$, with $c_p < |C| < c_{p+1}$, that visit $\min
\{|C|, |W|\}$ nodes of $W$, 
and is satisfied with equation by all cycles of
cardinality $c_p$ or $c_{p+1}$ that visit $\min
\{|C|, |W|\}$  nodes of $W$. 
Using these inequalities yields the following integer characterization for $\CCCP$:
\begin{equation} \label{model3}
\begin{array}{rcll}
x(\delta^{\mbox{\scriptsize out}}(i))- x(\delta^{\mbox{\scriptsize in}}(i)) & = & 0   & \mbox{for all } i \in N,\\ 
x(\delta^{\mbox{\scriptsize out}}(i)) & \leq & 1 & \mbox{for all } i \in N,\\
 -  x((S:N \setminus S)) + x(\delta^{\mbox{\scriptsize out}}(i))+ x(\delta^{\mbox{\scriptsize out}}(j)) & \leq & 1&
\mbox{for all }
S \subset N,\\
& & &  2 \leq |S| \leq n-2,\\
& & &  i \in S, j \in N \setminus S,\\ \\
x(A) & \geq & c_1,\\
x(A) & \leq & c_m,\\
\\
(c_{p+1} - |W|) \sum_{i \in W} x(\delta^{\mbox{\scriptsize out}}(i)) & & & \\
- (|W|- c_p) \sum_{i \in N \setminus W} x(\delta^{\mbox{\scriptsize out}}(i)) & & & \\
- c_p (c_{p+1} - |W|) & \leq & 0 & \forall \; W \subseteq N: \; \exists p\\ 
& & & \mbox{with } c_p < |W| <c_{p+1},\\ \\
x_{ij}&  \in&  \{0,1\}  & \mbox{for all } (i,j) \in A.
\end{array}
\end{equation}

However, in the polyhedral analysis of cardinality constrained path and
cycle polytopes we will focus on the directed cardinality
constrained path polytope for
a simple reason: valid inequalities
for $\CCPPst$ can easily be transformed into valid
inequalities for the other polytopes. In particular,
from the IP-model for $\CCPPst$ that we present in section 3 we derive IP-models for the remaining polytopes $\mathcal{P}$,
as illustrated in Figure 1, such that a transformed inequality is facet
defining for $\mathcal{P}$ when the original inequality is facet
defining for $\CCPPst$. In addition, the subpolytopes
$P_{s,t-\mbox{\scriptsize path}}^{(c_p)}(D)$  of
$P_{s,t-\mbox{\scriptsize path}}^{c}(D)$ were studied in \cite{Stephan}. Theorem \ref{T3} in Section 2 and
Table 1 in \cite{Stephan} imply that they are of codimension 1
whenever $4 \leq c_p \leq n-1$,
provided that we have an appropriate digraph $D$. Thus, any facet
defining inequality $\alpha x \leq \alpha_0$ for $P_{s,t-\mbox{\scriptsize path}}^{(c_p)}(D)$
which is also valid for $P_{s,t-\mbox{\scriptsize path}}^{c}(D)$ can
easily be shown to be facet defining also for
$P_{s,t-\mbox{\scriptsize path}}^{c}(D)$ if $\alpha y =\alpha_0$ holds
for some $y \in P_{s,t-\mbox{\scriptsize path}}^{c}(D) \setminus
P_{s,t-\mbox{\scriptsize path}}^{(c_p)}(D)$. So, in the present 
paper many facet proofs must not be given from the
scratch, but can be traced back to results in \cite{Stephan}. 

\vspace{0.5cm}
\begin{center}
\psset{xunit=1cm,yunit=1cm,runit=1cm} 
\begin{pspicture}(-4,-6)(6,2) 

\rput(-3,1.5){\rnode{1}{ \psframebox[linecolor=black]{$\uCCCP$}}} 
\rput(3,1.5){\rnode{2}{ \psframebox[linecolor=black]{$\CCCP$}}} 
\rput(-3,-1.5){\rnode{3}{ \psframebox[linecolor=black]{$\uCCPPst$}}} 
\rput(3,-1.5){\rnode{4}{ \psframebox*[fillcolor=yellow,
    linecolor=black]{$\CCPPst$}}}
 \rput(3,-1.5){\rnode{8}{ \psframebox[fillcolor=yellow, linecolor=black]{$\CCPPst$}}}
\rput(3,-4.5){\rnode{5}{ \psframebox[linecolor=black]{ $P_{s,t-\mbox{\scriptsize path}}^{(c_p)}(D)$}}} 

\ncline{->}{2}{1}\nbput{deorienting}
\ncline{->}{4}{2}\nbput{lifting}
\ncline{->}{4}{3}\nbput{deorienting}
\ncline{->}{5}{4}\nbput{lifting}

\rput(0,-5.5){Figure 1. $\CCPPst$ and related polytopes.}
\end{pspicture}
\end{center}

In the following we investigate the cardinality constrained path polytope
$ P_{0,n-\mbox{\scriptsize path}}^c(D)$ defined on a digraph $D=(N,A)$ with node set $N =
\{0,\dots,n\}$. In particular, $s=0, t=n$. Since $(0,n)$-paths do not use arcs entering $0$ or leaving $n$, we may assume that $\delta^{\mbox{\scriptsize in}}(0)= \delta^{\mbox{\scriptsize out}}(n)=
\emptyset$. Next, suppose that $A$ contains the arc $(0,n)$ and the
cardinality sequence $c$ starts with $c_1=1$. Then the equation
\[
\dim P_{0,n-\mbox{\scriptsize path}}^{(c_1,c_2,\dots,c_m)}(D) = \dim
P_{0,n-\mbox{\scriptsize path}}^{(c_2,\dots,c_m)}(D)+1
\] 
obviously holds.
Moreover, an inequality $\alpha x \leq \alpha_0$ defines a facet of
$P_{0,n-\mbox{\scriptsize path}}^{(c_2,\dots,c_m)}(D)$ if and only if
the inequality $\alpha x + \alpha_0 x_{0n} \leq \alpha_0$ defines a
facet of $P_{0,n-\mbox{\scriptsize
    path}}^{(1,c_2,\dots,c_m)}(D)$. Thus, the consideration of
cardinality sequences starting with 1 does not give any new insights
into the facial structure of cardinality constrained path
polytopes. So we may assume that $A$ does not contain the arc $(0,n)$.
So, for our purposes it suffices to suppose that the arc set $A$ of $D$
is given by 
\begin{equation} \label{arcset}
A=\{(0,i), (i,n) : i=1,\dots,n-1\} \bigcup \{(i,j) : 1 \leq i,j \leq n-1, i \neq
j\}.
\end{equation} 
Therefore, by default, we will deal with the directed graph $\tilde{D}_n = (\tilde{N}_n, \tilde{A}_n)$, where $\tilde{N}_n=\{0,1,\dots,n\}$ and $\tilde{A}_n=A$ is \eqref{arcset}. 

The remainder of the paper is organized as follows: In Section 2, we examine
the relationship between directed path and cycle
polytopes. In Section 3, we consider the inequalities of the IP-model
for the directed cardinality constrained path polytope $\CCPPon$  and
give necessary and sufficient conditions for them to be facet
defining. Moreover, we present some
further classes of inequalities that also cut off forbidden
cardinalities. Finally, in Section 4,
we transform facet defining inequalities for $\CCPPon$ into facet
defining inequalities for the other polytopes. 

\section{The relationship between directed path and cycle polytopes} 

This section generalizes the results in \cite{Stephan}, Section 2.
Denote by $\cal{P}$ the set of simple
$(0,n)$-paths $P$ in $\tdn=(\tnn,\tann)$. Let $D'$ be the digraph that arises by
removing node $0$ from $\tdn$ and identifying $\delta^{\mbox{\scriptsize out}}(0)$ with
$\delta^{\mbox{\scriptsize out}}(n)$. Then, $D'$ is a complete digraph on node set
$\{1,\dots,n\}$ and $\cal{P}$ becomes the set $\mathcal{C}^n$ of
simple cycles that visit node $n$.  
The convex hull of the incidence vectors of cycles $C \in \mathcal{C}^n$ 
in turn is the restriction of the cycle
polytope defined on $D'$ to the hyperplane $x(\delta^{\mbox{\scriptsize out}}(n))=1$. Balas
and Oosten \cite{BO} showed that the \emph{degree constraint} 
\[x(\delta^{\mbox{\scriptsize out}}(i)) \leq 1\] 
induces a facet of the cycle polytope defined on a complete
digraph. Hence, the path polytope
$P_{0,n-\mbox{\scriptsize path}}(\tdn)$ is isomorphic to a facet of the
cycle polytope $P_C(D')$. From the next theorem we conclude that
this relation holds also for cardinality constrained path and cycle
polytopes. We start with some preliminary statements from linear algebra.

\begin{Lemma} \label{L1}
Let $k \neq \ell$ be natural numbers, let $x^1,x^2,\dots,x^r \in \mathbb{R}^p$
be vectors satisfying the equation $1^T x^i = k$, and let $y \in
\mathbb{R}^p$ be a vector satisfying the equation $1^Ty=\ell$. 
Then the following holds:\\[1em]
(i) $y$ is not in the affine hull of the set $\{x^1,\dots,x^r\}$.\\[0.5em]
(ii) The points $x^1,\dots,x^r$ are affinely independent if and only if they are linearly independent.
\hfill $\Box$
\end{Lemma}

According to the terminology of Balas and Oosten~\cite{BO}, for any digraph $D=(N,A)$ on $n$ nodes we call the
polytope 
$$P_{CL}^c(D):= \{(x,y) \in P_C^c(D) \times \mathbb{R}^n :
y_i=1-x(\delta^{\mbox{\scriptsize out}}(i)),i=1,\dots,n\}$$
the \emph{cardinality
  constrained cycle-and-loops polytope}.  
Its integer points are the incidence vectors of spanning unions of a
simple cycle and loops.  

\begin{Lemma} \label{L2}
The points $x^1,\dots,x^p \in \CCCP$ are affinely independent if and
only if the corresponding points $(x^1,y^1),\dots,(x^p,y^p) \in
P_{CL}^c(D)$ are affinely independent. 
\end{Lemma}

\begin{proof} 
The map $f : P_{CL}^c(D) \to P_C(D), \; (x,y) \mapsto x$ is an affine isomorphism.
\end{proof}

\begin{Theorem} \label{T3}
Let $D_n=(N,A)$ be the complete digraph on $n \geq 3$ nodes and
$c=(c_1,\dots,c_m)$ a cardinality sequence with $m \geq 2$. Then the following
holds:\\[1em]
(i) The dimension of $\CCCPn$ is $(n-1)^2$.\\[0.5em]
(ii) For any node $i \in N$, the degree inequality $x(\delta^{\mbox{\scriptsize out}}(i))
  \leq 1$ defines a facet of $\CCCPn$.
\end{Theorem}

\begin{proof}
(i) Balas and Oosten \cite{BO} proved that $\dim
P_C(D_n)=(n-1)^2$, while Theorem 1 of Hartmann and \"Ozl\"uk \cite{HO}
says that
\begin{equation} \label{dimeq}
\dim P_C^{(k)}(D_n)= 
\left \{ 
\begin{array}{ll}
|A|/2 -1,& \mbox{if } k=2,\\
n^2-2n, & \mbox{if } 2<k<n \mbox{ and } n \geq 5,\\
n^2-3n+1, & \mbox{if } k=n \mbox{ and } n \geq 3,
\end{array}
\right .
\end{equation}
and $\dim P_C^{(3)}(D_4)= 6$.
Since $\CCCPn \subseteq P_C(D_n)$, it follows immediately that $\dim
\CCCPn \leq (n-1)^2$. When $n=3$, $m \geq 2$ implies
$\CCCPn=P_C(D_n)$, and thus $\dim P_C^{(2,3)}(D_3)=4$. When $n=4$, the
statement can be verified using a computer program, for instance, with
\texttt{polymake} \cite{polymake}. For $n \geq 5$ the claim follows from
\eqref{dimeq} and Lemma \ref{L1} (i) unless
$c=(2,n)$: it exists some cardinality $c_p$, with
$2<c_p<n$, and thus there are $n^2-2n+1$ affinely independent vectors
$x^r \in P_C^{(c_p)}(D_n) \subset \CCCPn$. Moreover, since $m \geq 2$,
there is a vector $y \in \CCCPn$ of another cardinality which is
affinely independent from the points $x^r$. Hence,
$\CCCPn$ contains $n^2-2n+2$ affinely independent points proving
$\dim \CCCPn = (n-1)^2$. 

When $c=(2,n)$, the above argumentation fails, since
the dimensions of both polytopes 
$P_C^2(D_n)$ and $P_C^n(D_n)$ are less than $n^2-2n$. Setting
$d_n:=\dim P_C^{(n)}(D_n)$, we see that there are $d_n+1=n^2-3n+2$ linearly
independent points $x^r \in P_C^{(2,n)}(D_n) \cap
P_C^{(n)}(D_n)$ satisfying $1^T x^r = n$. Clearly, the points $(x^r,y^r) \in P_{CL}^{(2,n)}$
are also linearly independent.
Next, consider the point 
$(x^{23},y^{23})$, where $x^{23}$
 is the incidence vector of the 2-cycle $\{(2,3),(3,2)\}$,
and $n-1$ further points 
$(x^{1i},y^{1i})$, where $x^{1i}$ is the incidence vector of the 2-cycle $\{(1,i),(i,1)\}$. 
The incidence matrix Z whose rows are the vectors
$(x^r,y^r)$, $r=1,2,\dots,d_n+1$, $(x^{23},y^{23})$, and $(x^{1i},y^{1i})$, $i=2,3,\dots,n$,
is of the form 
\[Z=
\begin{pmatrix}
X & \mathbf{0} \\
Y & L \\
\end{pmatrix}, 
\]
where 
\[L=
\left(
\begin{array}{c|c}
1 & 0 \hspace{0.2cm} 0 \hspace{0.2cm} 1  \cdots  1 \\ \hline
\\[-.5em]
\mathbf{0} & E-I \\
\end{array} \right) .
\]
E is the $(n-1) \times (n-1)$ matrix of all ones and I the $(n-1) \times (n-1)$ identity matrix. $E-I$ is nonsingular, and thus
$L$ is of rank $n$. $X$ is of rank $d_n+1$, and hence rank $(Z)=d_n+1+n=n^2-2n+2$. Together with 
Lemma \ref{L2}, this yields the desired result. 

(ii) When $n \leq 4$, the statement can be verified using a computer program. When $n \geq 5$ and $4 \leq c_p < n$ for some index $p \in \{1, \dots,m\}$,
the claim can be showed along the lines of the proof to part (i) using
Theorem 11 of Hartmann and \"Ozl\"uk \cite{HO} saying that the degree constraint
defines a facet of $P_C^{(c_p)}(D_n)$. 

It remains to show that the claim is true for $c \in \{(2,3),(2,n),(3,n),(2,3,n)\}$, $n \geq 5$. W.l.o.g. consider the inequality $x(\delta^{\mbox{\scriptsize out}}(1)) \leq 1$. When $c=(2,3)$, consider all 2- and 3-cycles whose incidence vectors satisfy $x(\delta^{\mbox{\scriptsize out}}(1))=1$. This are exactly $n^2-2n+1$ cycles, namely the 2-cycles $\{(1,j),(j,1)\}$, $j=2,\dots,n$, and the 3-cycles $\{(1,j),(j,k),(k,1)\}$ for all arcs $(j,k)$ that are not incident with node $1$. Their incidence vectors are affinely independent, and hence, the degree constraint is facet defining for $P_C^{(2,3)}(D_n)$. This implies also that it induces a facet of $P_C^{(2,3,n)}(D_n)$. Turning to the case $c=(2,n)$, note that the degree constraint is satisfied with equality by all Hamiltonian cycles. Hence, we have $d_n+1$ linearly independent Hamiltonian cycles and again, the 2-cycles $\{(1,i),(i,1)\}$, which are linearly independent of them. Finally, let $c=(3,n)$. Beside $d_n+1$ Hamiltonian cycles, consider the 3-cycles $(1,3),(3,4),(4,1)$ and $\{(1,2),(2,j),(j,1)\}$, $j=3,\dots,n$. Then the $n^2-2n+1$ corresponding points in $P_{CL}^c(D_n)$ build a nonsingular matrix. Hence, by Lemma \ref{L2}, it follows the desired result.  
\end{proof}

Given a cardinality sequence $c=(c_1,\dots,c_m)$ with $m \geq 2$ and $c_1 \geq 2$, Theorem \ref{T3} implies that $\dim \CCPPon = n^2-2n$. 
From Theorem \ref{T3} another important fact  can be derived. Facet defining inequalities for $\CCPPon$ can easily be lifted to facet defining inequalities for $\CCCPn$.
For sequential lifting, see Nemhauser and Wolsey \cite{NW}.

\begin{Theorem} \label{lifting}
Let $c=(c_1,\dots,c_m)$ be a cardinality sequence with $m \geq 2$ and $c_1 \geq 2$. Let $\alpha x \leq \alpha_0$ be a facet defining inequality for $P_{0,n-\mbox{\scriptsize path}}^{c}(\tdn)$ and $\gamma$ the maximum
of $\alpha(C)$ over all cycles $C$ in $\tdn$ with $|C|=c_p$ for some $p$. Setting $\alpha_{ni}:=\alpha_{0i}$ for $i=1,\dots,n-1$, the inequality
\begin{equation}
\sum_{i=1}^n \sum_{j=1 \atop j \neq i}^n \alpha_{ij} x_{ij} + (\gamma -\alpha_0)x(\delta^{\mbox{\scriptsize out}}(n)) \leq \gamma
\end{equation}
defines a facet of $P_C^{c}(D_n)$. \hfill $\Box$
\end{Theorem}

No similar relationship seems to hold between undirected cycle and path
polytopes.

\section{Facets of $\CCPPon$} \label{SecFacets}

Let $D=(N,A)$ be a digraph on node set $N=\{0,\dots,n\}$.
The integer points of $P_{0,n-\mbox{\scriptsize path}}^c(D)$ are characterized by the following system:
\begin{equation} \label{pathmodel}
\begin{array}{rcll}
x(\delta^{\mbox{\scriptsize out}}(i))- x(\delta^{\mbox{\scriptsize in}}(i)) & = & \multicolumn{2}{l}{
 \left \{ \begin{array}{r@{}l}
1 & \mbox{ if } i=0,\\
0 &\mbox{ if }  i \in N \setminus \{0,n\},\\
-1 & \mbox{ if }  i=n,\\
\end{array} \right.} \\
x(\delta^{\mbox{\scriptsize out}}(i)) & \leq & 1 & \mbox{for all } i \in N \setminus \{0,n\},\\
x((S:N \setminus S)) -  x(\delta^{\mbox{\scriptsize in}}(j)) & \geq & 0 & \forall  S \subset
N: 0,n \in S, j \in N \setminus S,\\ \\
x(A) & \geq & c_1,\\
x(A) & \leq & c_m,\\
\\
(c_{p+1} - |W|+1) \sum_{i \in W} x(\delta^{\mbox{\scriptsize out}}(i)) & & & \\
- (|W|-1 - c_p) \sum_{i \in N \setminus W} x(\delta^{\mbox{\scriptsize out}}(i)) & & & \\
- c_p (c_{p+1} - |W|+1) & \leq & 0 & \forall \; W \subseteq N: \; 0,n \in W, \; \exists p\\ 
& & & \mbox{with } c_p < |W|-1 <c_{p+1},\\ \\
x_{ij}&  \in&  \{0,1\}  & \mbox{for all } (i,j) \in A.
\end{array}
\end{equation}
Here, the cardinality forcing inequalities arise in another form, since the number of nodes that are visited by a simple path is one more than the number of arcs in difference to a simple cycle.
The first three and the integrality constraints ensure that $x$ is the
incidence vector of a simple $(0,n)$-path $P$ (cf. \cite{Stephan}). The
cardinality bounds and the
cardinality forcing inequalities guarantee that $|P|=c_p$ for some
$p$.

Dahl and Gouveia \cite{DG} gave a complete linear description of $P_{0,n-\mbox{\scriptsize
    path}}^{(1,2,3)}(D')$, where $D' = D \cup \{(0,n)\}$. So, we have also one for 
 $P_{0,n-\mbox{\scriptsize path}}^{(2,3)}(D)$. Consequently, from now
 on we exclude the case $c=(2,3)$ with respect to directed path
 polytopes. More precisely, in the sequel we consider only the set of
 cardinality sequences $\CS := \{c=(c_1,\dots, c_m) : m \geq 2, 2 \leq
 c_1 < \dots < c_m \leq n, c \neq (2,3) \}$.
However, as the proof of Theorem \ref{T3} indicates, the polyhedral
analysis of $\CCPPon$ becomes much harder if $c \in
\{(2,n),(3,n),(2,3,n)\}$. In order to avoid that the paper is surcharged
with long argumentations, we skip in particular these cases
and refer the interested reader to~\cite{KS}. 

Given a valid inequality $cx \leq c_0$, a $(0,n)$-path $P$ is said to be
\emph{tight} if $c(P)=c_0$.
Due to the flow conservation constraints, two different inequalities
that are valid for $P_{0,n-\mbox{\scriptsize path}}^c(D)$ may define
the same face. The next theorem, which is an adaption of a result of
Hartmann and \"Ozl\"uk \cite{HO}, says how those inequalities can be
identified. 
 
\begin{Theorem} \label{equiv}
Let $\alpha x \geq \alpha_0$ be a valid inequality for
$P_{0,n-\mbox{\scriptsize path}}^c(D)$ and let $T$ be a spanning tree
of $D$. Then for any specified set of coefficients $\beta_{ij}$ for
the arcs $(i,j) \in T$, there is an equivalent inequality $\alpha' x
\geq \alpha_0$ for $P_{0,n-\mbox{\scriptsize path}}^c(D)$ such that
$\alpha'_{ij} = \beta_{ij}$ for $(i,j) \in T$. \hfill $\Box$ 
\end{Theorem}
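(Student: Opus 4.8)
\textbf{Proof plan for Theorem \ref{equiv}.}
The plan is to exploit the fact that the flow conservation equations in \eqref{pathmodel} span precisely the lineality directions along which we are free to modify the coefficient vector $\alpha$ without changing the face defined on $P_{0,n-\mbox{\scriptsize path}}^c(D)$. Concretely, for each node $i \in N$ let $q^{(i)} \in \mathbb{R}^A$ be the incidence vector of the flow balance equation at $i$, i.e. $q^{(i)}_{ij} = 1$ for arcs leaving $i$ and $q^{(i)}_{ij} = -1$ for arcs entering $i$. Every $(0,n)$-path incidence vector $x$ satisfies $ (q^{(i)})^T x = b_i$ for the appropriate right-hand side $b_i \in \{1,0,-1\}$. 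Hence for any scalars $\lambda_i$, $i \in N$, the inequality $(\alpha + \sum_{i \in N} \lambda_i q^{(i)})x \geq \alpha_0 + \sum_{i \in N}\lambda_i b_i$ is valid and defines the same face of $P_{0,n-\mbox{\scriptsize path}}^c(D)$ as $\alpha x \geq \alpha_0$. (Note the constant term also shifts; this is why the statement keeps the same $\alpha_0$ only after one additionally observes that $\sum_i b_i = 0$, or one simply absorbs the shift — I would phrase the equivalence as "defines the same face", and check that with the standard normalization the right-hand side is unchanged because $b_0 + b_n = 0$ and all other $b_i = 0$, so $\sum_i \lambda_i b_i = \lambda_0 - \lambda_n$, which we can force to vanish by a final global choice; alternatively one notes the face, not the literal constant, is what matters.)

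The core step is then a counting/rank argument. I would first show that the $n+1$ vectors $q^{(i)}$, $i \in N$, satisfy exactly one linear dependence, namely $\sum_{i\in N} q^{(i)} = 0$ (each arc is counted $+1$ at its tail and $-1$ at its head), so they span an $n$-dimensional space $\mathcal{L} \subseteq \mathbb{R}^A$. Restricting attention to a spanning tree $T$ of $D$ (which has exactly $n$ arcs, since $|N| = n+1$), I claim the projection $\pi_T : \mathcal{L} \to \mathbb{R}^{T}$ onto the tree-arc coordinates is an isomorphism. Injectivity: if $v = \sum_i \lambda_i q^{(i)} \in \mathcal{L}$ vanishes on all arcs of $T$, then walking along $T$ from any fixed root and using $v_{ij} = \lambda_i - \lambda_j = 0$ across each tree arc $(i,j)$ (or $(j,i)$) forces all $\lambda_i$ equal, hence $v = \sum_i \lambda_i q^{(i)} = \lambda \sum_i q^{(i)} = 0$. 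Since both spaces have dimension $n$, $\pi_T$ is bijective. Therefore, given the desired target coefficients $\beta_{ij}$ on $T$, there is a unique $v \in \mathcal{L}$ with $v_{ij} = \beta_{ij} - \alpha_{ij}$ for all $(i,j) \in T$; set $\alpha' := \alpha + v$. By construction $\alpha'_{ij} = \beta_{ij}$ on $T$, and by the first paragraph $\alpha' x \geq \alpha_0$ is equivalent to $\alpha x \geq \alpha_0$.

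The main obstacle I anticipate is purely bookkeeping rather than conceptual: getting the right-hand side to come out as literally $\alpha_0$ (as the statement asserts) rather than a shifted constant, and handling the degenerate orientations in $\tdn$ where $\delta^{\mbox{\scriptsize in}}(0) = \delta^{\mbox{\scriptsize out}}(n) = \emptyset$ so that the "balance equation" at $0$ and at $n$ involves only outgoing resp. incoming arcs. One must be slightly careful that a spanning tree of the \emph{underlying undirected graph} is what is meant, and that the walk-along-the-tree argument uses tree arcs in whichever orientation they appear; this does not affect the rank computation. For the constant term: since the only nonzero right-hand sides are $b_0 = 1$ and $b_n = -1$, the shift is $\lambda_0 - \lambda_n$ where $\lambda$ is the (unique up to adding a constant to all $\lambda_i$) multiplier vector producing $v$; using the one degree of freedom $\sum_i q^{(i)} = 0$ we may translate all $\lambda_i$ by a common constant, which does not change $v$, and — if desired — renormalize so that, say, $\lambda_0 = \lambda_n$, making the shift zero and leaving the right-hand side exactly $\alpha_0$. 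This is the adaptation of Hartmann and \"Ozl\"uk's argument \cite{HO} from the cycle to the $(0,n)$-path setting, the only genuine change being the two unbalanced nodes $0$ and $n$.
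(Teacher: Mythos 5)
Your overall strategy is the intended one (the paper itself omits the proof, referring to Hartmann and \"Ozl\"uk, so there is nothing to compare against line by line): the incidence vectors $q^{(i)}$ of the flow conservation equations span the full lineality space of the affine hull of $P_{0,n-\mbox{\scriptsize path}}^c(D)$ (the polytope has codimension $n$ and the $n+1$ vectors $q^{(i)}$ have the single dependency $\sum_{i\in N}q^{(i)}=0$, hence span an $n$-dimensional space), and the restriction map to the $n$ arcs of a spanning tree of the underlying undirected graph is injective, hence bijective, because a vector $v$ with $v_{ij}=\lambda_i-\lambda_j$ vanishing on all tree arcs forces all $\lambda_i$ to coincide. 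This correctly produces $\alpha'=\alpha+v$ with the prescribed coefficients on $T$ and defining the same face; the rank and orientation bookkeeping is all in order.

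The one genuine flaw is your treatment of the right-hand side. You propose to use the remaining degree of freedom --- adding a common constant to all $\lambda_i$ --- to arrange $\lambda_0=\lambda_n$ and thereby make the shift $\lambda_0-\lambda_n$ vanish. That cannot work: this translation is precisely the kernel of $\lambda\mapsto v$ and leaves the difference $\lambda_0-\lambda_n$ unchanged. In fact, since $\lambda$ is determined modulo that translation by the prescribed tree values, the shift is the \emph{fixed} quantity $\lambda_0-\lambda_n=\sum_a \pm(\beta_a-\alpha_a)$, the signed sum taken over the arcs $a$ on the tree path from $0$ to $n$, and this is nonzero for generic $\beta$. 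So the honest conclusion of your argument is $\alpha'x\geq \alpha_0+(\lambda_0-\lambda_n)$, not $\alpha'x\geq\alpha_0$. Your first fallback --- that ``equivalent'' means ``defines the same face'', with the right-hand side adjusted accordingly --- is the correct reading, and it is consistent with how the theorem is actually invoked later in the paper (the coefficients are normalized on a tree and the constant $b_0$ is then recomputed from tight solutions rather than assumed unchanged). You should state the theorem's conclusion in that form and delete the renormalization claim; if one insists on literally preserving $\alpha_0$, one may only prescribe coefficients on arc sets avoiding a $0$--$n$ tree path, e.g.\ a spanning forest whose components separate $0$ from $n$.
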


\subsection{Facets related to cardinality restrictions}

The cardinality bounds $x(\tann) \geq c_1$ and $x(\tann) \leq c_m$ define
facets of  the cardinality constrained path polytope $\CCPPon$ if and only if $4 \leq c_i \leq n-1$ for $i=1,m$
(see Table 1 of  \cite{Stephan}).

Next, we turn to the cardinality forcing inequalities. Due to the easier notation, we analyze them for the polytope $P^* := \{x \in \CCCPn | x(\delta^{\mbox{\scriptsize out}}(1))=1\}$
which is isomorphic to $\CCPPon$.

\begin{Theorem} \label{TCF}
  Let $D_n=(N,A)$ be the complete digraph on $n \geq 4$ nodes and $W$ a subset of $N$ with $1 \in W$ and $c_p < |W| <
  c_{p+1}$ for some $p \in \{1,\dots,m-1\}$. The cardinality-forcing inequality 
  \begin{equation} \label{CF}
   (c_{p+1} - |W|) \sum_{i \in W} x(\delta^{\mbox{\scriptsize out}}(i)) 
   - (|W| - c_p) \sum_{i \in N \setminus W} x(\delta^{\mbox{\scriptsize out}}(i)) 
   \leq c_p(c_{p+1} - |W|)
  \end{equation}
  defines a facet of
  $P^*$ if and only if $c_{p+1}-|W| \geq 2$ and $c_{p+1} < n$ or
  $c_{p+1}=n$ and $|W| = n-1$.
\end{Theorem}

\begin{proof}
  Assuming that $|W| + 1 = c_{p+1} < n$, we see that (\ref{CF}) is
  dominated by nonnegativity constraints $x_{ij} \geq 0$ for $(i,j)
  \in N \setminus W$. 
  When $c_{p+1} = n$ and $n - |W| \geq 2$, (\ref{CF}) is dominated by
  another inequality of the same form for some
  $W' \supset W$ with $|W'| = n-1$. Therefore, if inequalities (\ref{CF}) are not facet defining, then they are dominated by other inequalities of the IP-model that are facet defining for $P^*$.

  Suppose that $c_{p+1} - |W| \geq 2$ and $c_{p+1} < n$. By choice, $|W|
  \geq 3$ and $|N \setminus W| \geq 3$. Moreover, assume that the equation $bx = b_0$
is satisfied by all points that satisfy (\ref{CF}) at equality.
  Setting $\iota := c_{p+1}-|W|$, we will show that
  \begin{equation} \label{conc1}
\begin{array}{rcll}
    b_{1i} & = & \iota   & \forall \; i \in N \setminus \{1\} \\
    b_{i1} & = & \iota   & \forall \: i \in W \setminus \{1\}, \\
    b_{ij} & = & \kappa  & \forall \: i \in W \setminus \{1\}, j \in N \setminus \{1\}, \\
    b_{ij} & = & \lambda & \forall \: i \in N \setminus W, j \in N \setminus \{1\}, \\
    b_{i1} & = & \mu     & \forall \: i \in N \setminus W
  \end{array}
\end{equation}
for some $\kappa \neq 0, \lambda, \mu$.
  Then, considering a tight cycle of length $c_p$ and two tight
  cycles of length $c_{p+1}$, one using  an arc in $(N \setminus W: \{1\})$, the
  other not, yields the equation system
  \begin{equation*}
  \begin{array}{rcl}
    b_0 & = & 2 \iota + (c_p-2) \kappa \\
    b_0 & = &   \iota + (|W|-1) \kappa + (c_{p+1} - |W| -1) \lambda + \mu \\
    b_0 & = & 2 \iota + (|W|-2) \kappa + (c_{p+1}-|W|) \lambda
  \end{array}
  \end{equation*}
  which solves to 
 \begin{equation*}
  \begin{array}{rcl}
    b_0 & = & 2 \iota + (c_p-2) \kappa \\
    \mu & = & \iota + (\frac{|W|-c_p}{ |W|-c_{p+1}}-1) \kappa \\
    \lambda & = & \frac{|W|-c_p}{ |W|-c_{p+1}} \kappa.
  \end{array}
  \end{equation*}
  Thus, $bx=b_0$ is the equation
  \begin{equation*}
  \begin{array}{rcl}
  \iota x(\delta^{\mbox{\scriptsize out}}(1)) + \iota x(\delta^{\mbox{\scriptsize in}}(1)) +  (\frac{|W|-c_p}{
    |W|-c_{p+1}}-1) \kappa \sum\limits_{i \in N \setminus W} x_{i1} \\
  + \kappa \sum\limits_{i \in W \setminus \{1\}} x(\delta^{\mbox{\scriptsize out}}_1(i)) + 
  \frac{|W|-c_p}{ |W|-c_{p+1}} \kappa \sum\limits_{i \in N
    \setminus W} x(\delta^{\mbox{\scriptsize out}}_1(i)) & = &
  2 \iota + (c_p-2) \kappa,
  \end{array}
  \end{equation*}
where $\delta^{\mbox{\scriptsize out}}_1(i) := \delta^{\mbox{\scriptsize out}}(i) \setminus \{(i,1)\}$.
  Adding $\kappa -\iota$ times the equations $x(\delta^{\mbox{\scriptsize out}}(1))=1$ and
  $x(\delta^{\mbox{\scriptsize in}}(1))=1$ and multiplying the resulting equation with
  $- \frac{|W|-c_{p+1}}{\kappa}$, we see that $bx=b_0$ is equivalent to
  (\ref{CF}).

To show (\ref{conc1}), we may assume without loss of generality that
$2 \in W$ and $b_{1i}= c_{p+1}-|W|$, $i \in N \setminus \{1\}$, and
$b_{21}= c_{p+1}-|W|$, by Theorem \ref{equiv}. 
Next, let $\mathcal{R}$ be the set of subsets of $N$ of cardinality
$c_{p+1}$ that contain $W$, i.e.,  
\[\mathcal{R} := \{R \subset N| \:
  |R|= c_{p+1}, R \supset W\}.
\]
 For any $R \in \mathcal{R}$, the
  $c_{p+1}$-cycles on $R$ are tight tours on $R$. Theorem 23 of
  Gr\"otschel and Padberg \cite{GP} implies that there are $\tilde{\alpha}_i^R,
  \tilde{\beta}_i^R$ for $i \in R$ such that $b_{ij} = \tilde{\alpha}_i^R +
  \tilde{\beta}_j^R$ for all $(i,j) \in A(R)$. Setting
  \begin{equation}
  \begin{array}{rcll}
  \alpha_i^R &  := & \tilde{\alpha}_i^R - \tilde{\alpha}_1^R & (i
  \in R), \\
  \beta_i^R  &  := & \tilde{\beta}_i^R - \tilde{\alpha}_1^R & (i \in R),
  \end{array}
  \end{equation}
  yields $\alpha_i^R + \beta_j^R = b_{ij}$ for all $(i,j) \in A(R)$.
  Since $\alpha_1^R = 0$ and $b_{1i} = \iota$, it follows that
  $\beta_i^R = \iota$ for all $i \in R \setminus \{1\}$. 
In a similar manner one can show for any $S \in \mathcal{R}$
  the existence of $\alpha_i^S, \beta_i^S$ for $i \in S$ with
  $\alpha_1^S = 0$, $\beta_j^S = \iota$ for $j \in S \setminus
  \{1\}$, and $\alpha_i^S + \beta_i^S = b_{ij}$ for all $(i,j) \in
  A(S)$. This implies immediately that $\alpha_i^R = \alpha_i^S$ and
  $\beta_i^R = \beta_i^S$ for all $i \in R \cap S$. Thus,
there are $\alpha_i, \beta_i$ for all $i \in N$ such that
  $\alpha_1 = 0$, $\beta_i = \iota$ for $i \in N \setminus \{1\}$,
  and $b_{ij} = \alpha_i +\beta_j $ for all $(i,j) \in A$. 

  Next, consider a tight $c_{p}$-cycle
  that contains the arcs $(1,k), (k,j)$ but does
  not visit node $\ell$ for some $j,k,\ell \in W$. Replacing node $k$ by node $\ell$ yields another
  tight $c_{p}$-cycle, and therefore $b_{1k} + b_{kj} = b_{1\ell }+b_{\ell j }$,
  which implies that $\alpha_k = \alpha_{\ell }$ for all
  $k, \ell \in W \setminus \{1\}$. Thus, there is $\kappa$ such that
  $b_{ij}=\kappa$ for all $i \in W \setminus \{1\}$, $j \in N
  \setminus \{1\}$. Moreover, it follows immediately that
  $b_{i1}=\iota$ for all $i \in W \setminus \{1\}$. One can show analogously that $\alpha_i = \alpha_j$ for all $i,j
  \in N \setminus W$. This implies the existence of $\lambda, \mu$
  with $b_{ij} = \lambda$ for all $i \in N \setminus W$, $j \in N
  \setminus \{1\}$ and $b_{i1} = \mu$ for all $i \in N \setminus W$.

  Finally, when $|W|+1 = c_{p+1} = n$, we show that there are $n^2 -
  2n$ affinely independent points $x \in P^*$ satisfying (\ref{CF}) at
  equality. Without loss of generality, let $W=
  \{1,\dots,n-1\}$. Because each tour is tight with
  respect to 
  (\ref{CF}), it exist $n^2-3n+2$ linearly independent points $(x^r,y^r)
  \in Q := \{(x,y) \in P_{CL}^c(D_n) | x(\delta^{\mbox{\scriptsize out}}(1) = 1)\}$ with $y^r = 0$. Furthermore, 
consider the incidence vectors of the $n-2$ cycles $(1,2,\dots,c_p)$, $(1,3,4,\dots,c_p+1),\dots, (1,n-2,n-1,2,3,\dots,c_p-2)$, $(1,n-1,2,3,\dots,c_p-1)$.
The corresponding points in $Q$ are linearly independent and they are also linearly independent of the points $(x^r,y^r)$. Hence, (\ref{CF}) is also facet defining 
if $|W|+1 = c_{p+1} = n$.
\end{proof}


\begin{Theorem} \label{TRS}
  Let $D_n=(N,A)$ be the complete digraph on $n$ nodes, and let $1 \in W \subset N$ with $c_p < |W| < c_{p+1}$
for some $p \in \{1,\dots,m-1\}$. 
  The \emph{cardinality-subgraph inequality}
  \begin{equation} \label{RS}
    2x(A(W)) - (|W|-c_p-1) [x((W:N \setminus W)) + x((N \setminus W:W))]
    \leq 2 c_p
  \end{equation}
  is valid for $P^*$ and induces a facet of $P^*$
  if and only if $p+1 < m$ or $c_{p+1}=n = |W|+1$.
\end{Theorem}

\begin{proof}
A cycle of length less or equal to $ c_p$ uses at
most $c_p$ arcs of $A(W)$ and thus its incidence vector satisfies
(\ref{RS}). A cycle $C$ of length greater or equal to $c_{p+1}$ uses
at most $|W|-1$ arcs in $A(W)$ and if $C$ indeed visits any node in $W$, then it uses at least 2 arcs in $(W:N \setminus
W) \cup (N \setminus W:W)$ and hence, 
\[ 
\begin{array}{lrr}
\multicolumn{3}{l}{2 \chi^C(A(W)) - (|W|-c_p-1) [\chi^C ((W:N \setminus
  W)) + \chi^C((N \setminus W:W))] \hspace{1cm}} \\
& & \leq 2(|W|-1) - 2(|W|-c_p-1) = 2c_p.
\end{array} 
\] 
In particular, all cycles of feasible length that visit node $1$
satisfy (\ref{RS}). 

To prove that \eqref{RS} is facet defining,
assume that $p+1=m$ and $c_m < n$. When $c_{p+1}-c_p=2$ holds, then
(\ref{RS}) does not induce a facet of $P^*$ for the same reason as the
corresponding cardinality forcing inequality does not induce a facet
of $P^*$. Indeed, both inequalities define the same face. When
$c_{p+1}-c_p > 2$, then it is easy to see that the face induced by
(\ref{RS}) is a proper subset of the face defined by the cardinality
forcing inequality (\ref{CF}), and thus, it is not facet defining.
The same argumentation holds when $p+1=m$, $c_m =n$, and $n - |W| > 1$. 

To show that (\ref{RS}) defines a facet, when the conditions are satisfied, we
suppose that the equation $bx=b_0$ is satisfied by every $x \in P^*$
that satisfies (\ref{RS}) at equality. Using Theorem \ref{equiv} we
may assume that $b_{w1}= 2$ for some $w \in W$, $b_{1i}=2$ for all $i
\in W$, and $b_{iw} = -(|W|-c_p-1)$ for all $i \in N \setminus W$. 

Let $q,r \in N \setminus W$ be two nodes that are equal if
$c_{p+1}=|W|+1$ and otherwise different. Then, all $(q,r)$-paths of
length $|W|+1$ whose internal nodes are all the nodes of $W$ satisfies
the equation $bx = b_0$. (Note, in case $c_{p+1}=|W|+1$, the paths are
Hamiltonian cycles.) Thus, it exist $\alpha_q$, $\beta_r$, and 
$\alpha_j$, $\beta_j$ for $j \in W$ with 
\[
\begin{array}{rcll}
b_{qj} & = & \alpha_q+\beta_j & (j \in W)\\
b_{ir} & = & \alpha_i+\beta_r & (i \in W)\\  
b_{ij} & = & \alpha_i+\beta_j & ((i,j) \in A(W)). 
\end{array}
\]
Without loss of generality we may assume that $\beta_w=0$. Since
$b_{1j}=2$, it follows that $\alpha_1 = 2$, $\beta_j = 0$ for all
$j \in W \setminus \{1\}$, and $\alpha_q = |W|-c_p-1$. When $c_p=2$,
then the cycles $\{(1,j),(j,1)\}$ for $j \in W \setminus \{1\}$. When
$c_p \geq 3$, then consider a tight $c_p$-cycle that starts with
$(1,i),(i,j)$ and skips node $k$ for some $i,j,k \in W \setminus
\{1\}$. Replacing the arcs $(1,i),(i,j)$ by $(1,k),(k,j)$ yields
another tight $c_p$-cycle, and thus the equation
$b_{1i}+b_{ij}=b_{1k}+b_{kj}$. In either case, it follows that $b_{j1}
= 2$ for $j \in W \setminus \{1\}$ and there is $\lambda$
such that $b_{ij} = \lambda$ for all $(i,j) \in A(W \setminus
\{1\})$. Summarizing our intermediate results and adding further, easy
obtainable observations, we see that
\begin{equation}
\begin{array}{rcll} \label{eq3}
b_{1i} & = & 2 & (i \in W \setminus \{1\}) \\
b_{i1} & = & 2 & (i \in W \setminus \{1\}) \\
b_{ij} & = & \lambda & ((i,j) \in A(W \setminus \{1\})) \\
b_{qi} & = & -(|W|-c_p-1) & (i \in W \setminus \{1\}) \\
b_{q1} & = & -(|W|-c_p-1) + 2 - \lambda \\
b_{ir} & = & -(|W|-c_p-1)(\lambda-1) & (i \in W \setminus \{1\}) \\
b_{1r} & = & -(|W|-c_p-1)(\lambda-1) + 2 - \lambda \\
b_0 & = & 4 + (c_p-2) \lambda
\end{array}
\end{equation}
holds.

So, when $c_{p+1}=n$, we have $q=r$ and $N \setminus W = \{q\}$, and thus,
$bx = b_0$ is the equation 
\[
\begin{array}{rcl}
2 x(\delta^{\mbox{\scriptsize out}}(1)) - \lambda x_{1q}+2x(\delta^{\mbox{\scriptsize in}}(1))-\lambda x_{q1} +
\lambda x(A(W \setminus \{1\}))\\ 
-(|W|-c_p-1)x(\delta^{\mbox{\scriptsize out}}(q)) - (|W|-c_p-1)(\lambda-1) x(\delta^{\mbox{\scriptsize in}}(q))
& = &  4 + (c_p-2) \lambda.
\end{array}
\] 
Adding $(1-\frac{\lambda}{2})(|W|-c_p-1)$ times the equation
$x(\delta^{\mbox{\scriptsize out}}(q))-x(\delta^{\mbox{\scriptsize in}}(q))=0$ and $(\lambda-2)$ times the
equations $x(\delta^{\mbox{\scriptsize out}}(1))=1$ and $x(\delta^{\mbox{\scriptsize in}}(1))=1$, we see that
$bx=b_0$ is equivalent to (\ref{RS}), and hence (\ref{RS}) is facet
defining. 

Otherwise, that is, if $p+1 < m$, (\ref{eq3}) holds for each pair of
nodes $q,r \in N \setminus W$. Moreover, letting $k \neq l \in W
\setminus \{1\}$, it can be seen that every $(k,l)$-path $P$
of length $c_{p+1}-|W|+1$ or $c_m - |W|+1$ whose internal nodes are in
$N \setminus W$ satisfies the equation $bx=
-\lambda(|W|-c_p-1)$. Thus, there are $\pi_k, \pi_l$, and $\{\pi_j | j
\in N \setminus W\}$ such that 
\[
\begin{array}{rcll}
b_{kj} & = & \pi_k - \pi_j & (j \in N \setminus W)\\
b_{jl} & = & \pi_j - \pi_l & (j \in N \setminus W)\\  
b_{ij} & = & \pi_i - \pi_j & ((i,j) \in A(N \setminus W)).
\end{array}
\]
Since $b_{kj}=-(|W|-c_p-1)(\lambda-1)$ for $j \in N \setminus W$, it follows
that $\pi_i = \pi_j$ for all $i,j \in N \setminus W$ which implies
that $b_{ij} = 0$. Hence, $bx = b_0$ is the equation
\[
\begin{array}{rcl}
2 x(\delta^{\mbox{\scriptsize out}}(1))+2x(\delta^{\mbox{\scriptsize in}}(1))-\lambda \sum_{i \in N \setminus W}
(x_{1i}+x_{i1}) \\
+ \lambda x(A(W \setminus \{1\}))-(|W|-c_p-1)x((N \setminus W:W)) \\
- (|W|-c_p-1)(\lambda-1) x((W:N \setminus W))& = &  4 + (c_p-2) \lambda.
\end{array}
\] 
Adding $(1-\frac{\lambda}{2})(|W|-c_p-1)$ times the equation
\[x((N \setminus W:W))-x((W: N \setminus W))=0\] 
and $(\lambda-2)$ times the
equations $x(\delta^{\mbox{\scriptsize out}}(1))=1$ and $x(\delta^{\mbox{\scriptsize in}}(1))=1$, we see that
$bx=b_0$ is equivalent to (\ref{RS}), and hence (\ref{RS}) is facet
defining.  
\end{proof}

\subsection{Facets unrelated to cardinality restrictions}

\begin{Theorem} \label{Tnon}
Let $c \in \CS$ and $n \geq 4$. The \emph{nonnegativity constraint} 
\begin{equation} \label{nnc}
x_{ij} \geq 0
\end{equation}
defines a facet of $\CCPPon$ if and only if $c \neq (2,n)$ or
$c=(2,n)$, $n \geq 5$, and $(i,j)$ is an inner arc.   
\end{Theorem}

\begin{proof}
By Theorem 3.1 of \cite{Stephan}, (\ref{nnc}) defines a facet
of $P_{0,n-\mbox{\scriptsize path}}^{(k)}(\tilde{D}_n)$ whenever $4 \leq k \leq n-1$. Hence, Lemma \ref{L1}
implies that (\ref{nnc}) is facet defining for $\CCPPon$ if $n \geq 5$
and there is an index $p$ with $4 \leq c_p \leq n-1$. 
In case of $c \in \{(2,n),(3,n),(2,3,n)\}$, see~\cite{KS}.
\end{proof}

\begin{Theorem} \label{Tdegree}
Let $c \in \CS$, $n \geq 4$, and $i$ be an internal node of $\tdn$. The
degree constraint  
\begin{equation} \label{degree}
x(\delta^{\mbox{\scriptsize out}}(i)) \leq 1
\end{equation}
induces a facet of $\CCPPon$ unless $c=(2,n)$.
\end{Theorem}

\begin{proof} 
When $n \geq 5$ and $4 \leq c_p \leq n-1$ for some index $p$,
(\ref{degree}) can be shown to induce a facet of $\CCPPon$ using Lemma
\ref{L1} of this paper and Theorem 3.2 of \cite{Stephan},
saying that (\ref{degree}) induces a facet of
$P_{0,n-\mbox{\scriptsize path}}^{(c_p)}(\tdn)$.
In case of $c \in \{(2,n),(3,n),(2,3,n)\}$, see~\cite{KS}.
\end{proof}


\begin{Theorem} \label{Tosmc}
Let $c = (c_1,\dots,c_m) \in \CS$, $n \geq 4$, $S \subset \tnn$, $0,n \in
S$, and $v \in \tnn \setminus S$. The \emph{one-sided min-cut inequality}
\begin{equation} \label{osmc}
x((S:\tnn \setminus S)) - x(\delta^{\mbox{\scriptsize in}}(v)) \geq 0
\end{equation}
induces a facet of $\CCPPon$ if and only if $|\tnn \setminus S| \geq 2$,
$|S| \geq c_1+1$, and $c \neq (2,n)$. 
\end{Theorem}

\begin{proof} 
\emph{Necessity.}
When $\tnn \setminus S = \{v\}$, (\ref{osmc}) becomes the trivial
inequality $0x \geq 0$, and thus it is
not facet defining. When $|S| \leq c_1$, all feasible $(0,n)$-paths 
$P$ satisfy $|P \cap (S:\tnn \setminus S)| \geq 1$, and hence,
(\ref{osmc}) can be obtained by summing up the inequality $x((S:\tnn
\setminus S)) \geq 1$ and the degree constraint $-x(\delta^{\mbox{\scriptsize in}}(v)) \geq
-1$. When $c=(2,n)$, see~\cite{KS}.

\emph{Sufficiency.} 
By Theorem 3.4 of \cite{Stephan}, (\ref{osmc}) induces a facet of
$P_{0,n-\mbox{\scriptsize path}}^{(k)}(\tdn)$ for $4 \leq k \leq n-2$ if
and only if $|S| \geq k+1$ and $|\tnn \setminus S| \geq 2$.  Hence, when
$|S| \geq c_i+1$ for some index $i \in \{1,\dots,m\}$ with $c_i \geq
4$ and $|\tdn \setminus S| \geq 2$, inequality (\ref{osmc}) is facet defining for $\CCPPon$ by
applying Lemma \ref{L1}. In particular, this finishes the proof when
$i=1$. Note that in case $i=m$, $c_i \geq 4$ and
$|S| \geq c_i+1$ imply $4 \leq c_m \leq n-2$, since $|S| \leq n-1$.
When $c_1 = 2$ or $c_1 = 3$, see~\cite{KS}.
\end{proof}

We introduce a further class of inequalities whose undirected pendants
we need later for the characterization of the integer points of
$\uCCCPn$.

\begin{Theorem} \label{Tmincut}
Let $c \in \CS$, $n \geq 4$, $S \subset \tnn$, and $0,n \in
S$. The \emph{min-cut inequality}
\begin{equation} \label{mc}
x((S:\tnn \setminus S)) \geq 1
\end{equation}
is valid for $\CCPPon$ if and only if $|S| \leq c_1$ and facet
defining for it if and only if $3 \leq |S| \leq c_1$ and $|\tnn
\setminus S| \geq 2$. 
\end{Theorem}

\begin{proof}
When $c \neq (3,n)$, the theorem follows from Theorem 3.3 of
\cite{Stephan}, Lemma \ref{L1}, and the fact that $m \geq 2$. When
$c=(3,n)$,  see~\cite{KS}.
\end{proof}

\subsection{Inequalities specific to odd or even paths}

\begin{Theorem} \label{TOP}
Let $c=(c_1,\dots,c_m)$ be a cardinality sequence with $m \geq 2$,
$c_1 \geq 2$, and
$c_p$ even for $1 \leq p \leq m$, and let $\tnn = S \; \dot{\cup} \;  T$ be a partition of $\tnn$
with $0 \in S$, $n \in T$. The \emph{odd path exclusion constraint}
\begin{equation} \label{ocec}
x(\tann(S))+ x(\tann(T)) \geq 1
\end{equation}
is valid for $\CCPPon$ and defines a facet of  $\CCPPon$ if and only
if (i) $c_1=2$ and $|S|, |T| \geq \frac{c_2}{2}+1$, or (ii) $c_1 \geq 4$ and 
$|S|,|T| \geq \frac{c_2}{2}$. 
\end{Theorem}

\begin{proof}
Clearly,
each $(0,n)$-path of even length uses at least one arc in $\tann(S) \cup
\tann(T)$. Thus, inequality (\ref{ocec}) is valid.

When $|S|$ or $|T|$ is less than
$\frac{c_2}{2}$, then there is no $(0,n)$-path of length $c_p$, $p
\geq 2$, that satisfies (\ref{ecec}) at equality which implies that (\ref{ecec}) cannot be facet defining for $\CCPPon$. Thus $|S|, |T| \geq
\frac{c_2}{2}$ holds if \eqref{ocec} is facet defining. For $c_1=2$ we have to require
even $|S|, |T| \geq \frac{c_2}{2}+1$. For the sake of contradiction
assume w.l.o.g. that $|S|=\frac{c_2}{2}$. Then follows $|T| \geq
\frac{c_2}{2}+1$. However, for an inner arc $(i,j) \in \tann(S)$ there is
no tight $(0,n)$-path of cardinality $c_2$ that uses $(i,j)$. 

Next, let (i) or (ii) be true. The conditions imply that for $p=1$ or $p=2$ $c_p \geq 4$ and $|S|,|T| \geq \frac{c_p}{2}+1$ holds. Restricted to the polytope 
$P_{0,n-\mbox{\scriptsize path}}^{(c_p)}(\tdn)$ inequality (\ref{ocec}) is equivalent to the max-cut inequality $x((S:T)) \leq \frac{c_p}{2}$ which were shown to be facet defining for $P_{0,n-\mbox{\scriptsize path}}^{(c_p)}(\tdn)$ (see Theorem 3.5 of \cite{Stephan}). Thus there are $n^2-2n-1$ linearly independent points in $\CCPPon \cap P_{0,n-\mbox{\scriptsize path}}^{(c_p)}(\tdn)$ satisfying (\ref{ocec}) at equality. Moreover, the conditions ensure that there is also a tight $(0,n)$-path of cardinality $c_q$, where $q=3-p$. By Lemma \ref{L1} (i), the incidence vector of this path is affinely independent of the former points, and hence, (\ref{ocec}) defines a facet of $\CCPPon$.
\end{proof}

\begin{Theorem} \label{TEP}
Let $c=(c_1,\dots,c_m)$ be a cardinality sequence with $m \geq 2$,
$c_1 \geq 3$, and
$c_p$ odd for $1 \leq p \leq m$, and let $\tnn = S \; \dot{\cup} \;  T$ be a partition of $\tnn$
with $0,n \in S$. The \emph{even path exclusion constraint}
\begin{equation} \label{ecec}
x(\tann(S))+ x(\tann(T)) \geq 1
\end{equation}
is valid for $\CCPPon$ and defines a facet of  $\CCPPon$ if and only
if (i) $c_1=3$, $|S|-1 \geq \frac{c_2+1}{2}$, and $|T| \geq
\frac{c_2-1}{2}$, or (ii) $c_1 \geq 5$ and $\min (|S|-1,|T|) \geq
\frac{c_2-1}{2}$. 
\end{Theorem}

\begin{proof}
Up to one special case, Theorem \ref{TEP} can be proved quite
similarly as Theorem \ref{TOP}. Hence, we skip the proof here and
refer the interested reader to~\cite{KS}.
\end{proof}

\begin{Theorem} \label{TMCF}
Let $D_n=(N,A)$ be the complete digraph on $n \geq 6$ nodes and
$c=(c_1,\dots,c_m)$ a cardinality sequence with $m \geq 3$, $c_1 \geq
2$, $c_m \leq n$, and $c_{p+2}=c_{p+1}+2=c_{p}+4$ for some $2 \leq p \leq m-2$.
Moreover, let $N=P \; \dot{\cup} \;  Q \; \dot{\cup} \; \{r\}$ be a partition of $N$, where $P$ contains
node $1$ and satisfies $|P|=c_p+1=c_{p+1}-1$. Then the \emph{modified
  cardinality forcing inequality}  
\begin{equation} \label{MCF}
\sum_{v \in P} x(\delta^{\mbox{\scriptsize out}}(v)) - \sum_{v \in Q} x(\delta^{\mbox{\scriptsize out}}(v)) +
x((Q:\{r\})) - x((P:\{r\})) \leq c_p
\end{equation}
defines a facet of $P^* = \{x \in \CCCPn | x(\delta^{\mbox{\scriptsize out}}(1))=1$. 
\end{Theorem}

\begin{proof}
The arcs that are incident with node $r$
have coefficients zero. Let $C$ be a cycle that visits node $1$ and is of
feasible length. If $C$ does not visit node $r$, $C$ satisfies clearly
(\ref{MCF}), since the restriction of (\ref{MCF}) to the arc set $A(N
\setminus \{r\})$ is an ordinary cardinality forcing inequality
(\ref{CF}). When $C$ visits node $r$ and uses at most $c_p$ arcs whose
corresponding coefficients are equal to one, then $C$ satisfies also
(\ref{MCF}), since all those coefficients that are not equal to 1 are $0$
or $-1$. So, let $C$ with $|C| \geq c_{p+1} $visit node $r$ and use as
many arcs whose 
corresponding coefficients are equal to one as possible. That are
exactly $|P|$ arcs which are contained in $A(P) \cup (P:Q)$. But then
$C$ must use at least one arc in $A(Q) \cup (Q:P)$ whose coefficient
is $-1$. Hence, also in this case $C$ satisfies (\ref{MCF}), which
proves the validity of (\ref{MCF}).

To show that \eqref{MCF} is facet defining,
suppose that the equation $bx = b_0$ is satisfied by all points that
satisfy (\ref{MCF}) at equality. By Theorem \ref{equiv}, we may
assume that $b_{1r}=b_{r1}=0$ and $b_{1i}=1$ for $i \in N \setminus
\{1,r\}$. By considering the $c_{p+1}$-cycles with respect to $P \cup
\{j\}$ for $j \in N \setminus P$, one can show along the lines of the
proof of Theorem \ref{TCF} that there are $\alpha_k$, $\beta_k$, $k
\in N$, with $b_{ij}=\alpha_i+\beta_j$ for all $(i,j) \in A$, $\alpha_1=0$,
$\beta_r=0$, and $\beta_j=1$. In particular, when $c_{p}=2$, the tight 2-cycles
$\{(1,i),(i,1)\}$, $i \in P$ yield $\alpha_k=\alpha_\ell$ for
$k,\ell \in P \setminus \{1\}$. Otherwise one can show as in the proof of Theorem
\ref{TCF} that $\alpha_k = \alpha_l$ for all $k,l \in P \setminus
\{1\}$. Thus, there is $\kappa$ such that $\alpha_i= \kappa$ for $i \in P$, $i \neq 1$. This in turn implies that there is $\lambda$ with $\alpha_j= \lambda$ for $j \in Q$ by considering tight $c_{p+1}$-cycles. 
Then, the equation $b_{r1}=0$, a tight cycle of length $c_p$, and two tight
cycles of length $c_{p+1}$, one visiting node $r$, the other a node $j \in Q$, yield the equation system
  \begin{equation*}
  \begin{array}{rcl}
b_{r1} & = & 0\\
    b_0 & = & (c_p-1)(\kappa+1) +\beta_1 \\
    b_0 & = & c_p(\kappa+1)\\
    b_0 & = & c_p(\kappa+1)+\lambda+\beta_1+1\\
  \end{array}
  \end{equation*}
  which solves to 
 \begin{equation*}
  \begin{array}{rcl}
    b_0 & = & c_p(\kappa+1)\\
    \lambda & = &- \kappa -2\\
    \beta_1 & = & \kappa+1\\
\alpha_r & = & -\kappa -1.
  \end{array}
  \end{equation*}

Next, consider for $i \in P \setminus \{1\}$, $j,k \in Q$ a $c_{p+2}$-cycle $C$ that starts in node 1, then visits all nodes in $P \setminus \{1,i\}$, followed by the nodes $j$, $r$, $i$, $k$, and finally returns to 1. Since $C$ is tight, we can derive the equation 
$$1+(c_p-1)(\kappa+1)+ b_jr+(\alpha_r+1)+(\kappa+1)+(\lambda+\beta_1)=b_0$$
which solves to $b_{jr}=\kappa$. By considering further tight $c_{p+2}$-cycles one can deduce that $b_{ri}=-\kappa$ for $i \in Q$ and $b_{jk}=-\kappa-1$ for $(j,k) \in A(Q)$. Thus, $bx=b_0$ is the equation
\[
\begin{array}{rcl}
x(\delta^{\mbox{\scriptsize out}}(1) \setminus \{(1,r)\})-x((Q: \{1\}))+ (2\kappa+1)x((P \setminus \{1\}: \{1\}))\\
+ (\kappa+1) \sum_{i \in P \setminus \{1\}} x(\delta^{\mbox{\scriptsize out}}(i) \setminus \{(i,1),(i,r)\})\\
- (\kappa+1)  \sum_{i \in Q} x(\delta^{\mbox{\scriptsize out}}(i) \setminus \{(i,1),(i,r)\})\\
- \kappa x(\delta^{\mbox{\scriptsize out}}(r) \setminus \{(r,1)\})
+ \kappa x(\delta^{\mbox{\scriptsize in}}(r) \setminus \{(1,r)\})& = & c_p(\kappa+1).
\end{array}
\]
Adding $\kappa$ times the equations $x(\delta^{\mbox{\scriptsize out}}(1))-x(\delta^{\mbox{\scriptsize in}}(1))=0$ and 
 $x(\delta^{\mbox{\scriptsize out}}(r))-x(\delta^{\mbox{\scriptsize in}}(r))=0$, we see that $bx=b_0$ is equivalent to (\ref{MCF}), and hence, (\ref{MCF}) defines a facet.
\end{proof}

\subsection{Separation}

All inequalities of the IP-model  \eqref{pathmodel} as well as the
min-cut inequalities \eqref{mc} and the modified cardinality forcing
inequalities \eqref{MCF} can be separated in polynomial time.  For the
one-sided min-cut inequalities (\ref{osmc}), separation consists
in finding a minimum $\{0,n\}-l$-cut in $\tdn$ for each node $l \in \tnn
\setminus \{0,n\}$. The cardinality forcing inequalities can be
separated with a greedy algorithm. To this end, let $x^* \in
\mathbb{R}^{\tann}_+$ be a fractional point. Set $y^*_i := x^*(\delta^{\mbox{\scriptsize out}}(i))$
for $i=0,\dots,n-1$, and apply the greedy separation algorithm 8.27 of
Gr\"otschel \cite{Groetschel} on input data $y^*, \tnn$, and $c$. To
separate the modified cardinality forcing inequalities this algorithm
can be applied $n-1$ times as subroutine, namely: for each internal
node $r$ of $\tnn$, apply it on the subgraph induced by $\tnn \setminus
\{r\}$.

Next, the separation problem for the odd (even) path exclusion constraints is equivalent to the maximum cut problem which is known to be NP-hard. Turning to the cardinality-subgraph inequalities \eqref{RS}, it seems to be very unlikely that there is a polynomial time algorithm that solves the separation problem for this class of inequalities. 
Assume that there is given an instance $(D'=(N',A'),c=(c_1,\dots,c_m),
x^*)$ of the separation problem, where $x^* \in A'$ is a fractional
point satisfying $x^*(\delta^{\mbox{\scriptsize out}}(1))=1$. (We
consider the separation problem for $P^*$.) In the special case of
$m=2$ and $c_m=c_2-c_1=2$ the separation problem for the inequalities
\eqref{RS} and $x^*$ reduces to find a subset $W^*$ of $N'$ of
cardinality $k:=c_1+1$ such that $1 \in W^*$ and $x^*(A'(W^*)) >
2c_p$. This problem can be tackled on the underlying graph
$G'=(N',E')$ with edge weights $w_e:=x^*_{ij}+x^*_{ji}$ for $e=[i,j]
\in E'$, where $x^*_{ij}$ is set to zero if the arc $(i,j)$ is not in
$A'$. The associated optimization problem $\max w(E'(W)), W \subseteq
N', 1 \in W, |W|=k$, is a variant of the weighted version of the
densest $k$-subgraph problem which is known to be NP-hard (see Feige and
Seltser~\cite{FS}).

\section{Facets of the other polytopes}

In this section, we derive facet defining inequalities for related
polytopes mentioned in the introduction from facet defining
inequalities for the cardinality constrained path polytope
$\CCPPon$. 

\subsection{Facets of the directed cardinality constrained cycle
  polytope}

\begin{Corollary} \label{CfacetsCCCP}
Let $D_n=(N,A)$ be the complete digraph on $n \geq 3$ nodes and
$c=(c_1,\dots,c_m)$ a cardinality sequence with $m \geq 2$ and $c_1
\geq 2$.  Then the following statements hold:\\[0.5em]
(a) The nonnegativity constraint $x_{ij} \geq 0$ defines a facet
  of $\CCCPn$.\\[0.5em]
(b) The degree constraint $x(\delta^{\mbox{\scriptsize out}}(i)) \leq 1$ defines a
  facet of $\CCCPn$ for every $i \in N$. \\[0.5em]
(c) Let $S$ be a subset of $N$ with $2 \leq |S| \leq n-2$, let
  $v \in S$ and $w \in N \setminus S$. The \emph{multiple cycle
    exclusion constraint}
\begin{equation} \label{mcec}
x(\delta^{\mbox{\scriptsize out}}(v))+x(\delta^{\mbox{\scriptsize out}}(w)) - x((S:N \setminus S)) \leq 1
\end{equation}
induces a facet of $\CCCPn$ if and only if $|S|,|N \setminus S| \geq
c_1$ and $c \notin \{(2,3),(2,n)\}$.\\[0.5em]
(d) For any $S \subset N$ with $|S|,|N \setminus S| \leq c_1-1$, the min-cut inequality
\begin{equation} \label{cmc}
x((S:N \setminus S)) \geq 1
\end{equation}
is valid for $\CCCPn$ and induces a facet of $\CCCPn$ if and only if
$|S|,|N \setminus S| \geq 2$. \\[0.5em]
(e) Let $S$ be a subset of $N$ and $j \in N \setminus S$. The
  one-sided min-cut inequality 
\begin{equation} \label{cosmc} 
x((S:N \setminus S)) -x(\delta^{\mbox{\scriptsize out}}(j)) \geq 0
\end{equation}
defines a facet of $\CCCPn$ if and only if
$|S| \geq c_1$ and $ 2 \leq |N \setminus S| \leq c_1-1$.\\[0.5em]
(f) The cardinality bound $x(A) \geq c_1$
  defines a facet of $\CCCPn$ if and only if $c_1=3$ and $n \geq 5$ or
  $4 \leq c_1 \leq n-1$. Analogously,  $x(A) \leq c_m$
  defines a facet of $\CCCPn$ if and only if $c_m=3$ and $n \geq 5$ or
  $4 \leq c_m \leq n-1$.\\[0.5em]
(g) Let $W$ be a subset of $N$ with $c_p < |W| <
  c_{p+1}$ for some $p \in \{1,\dots,m-1\}$. The cardinality-forcing
  inequality (\ref{CF}) defines a facet of $\CCCPn$ if and only if
  $c_{p+1}-|W| \geq 2$ and $c_{p+1} < n$ or $c_{p+1}=n$ and $|W| =
  n-1$. \\[0.5em]
(h) Let $W$ be a subset of $N$ such that $c_p < |W| < c_{p+1}$ holds
for some $p \in \{1,\dots,m-1\}$. The cardinality-subgraph inequality (\ref{RS}) is valid for $\CCCPn$ and induces a facet of $\CCCPn$
if and only if $p+1 < m$ or $c_{p+1}=n = |W|+1$. \\[0.5em]
(i) Let $c=(c_1,\dots,c_m)$ be a cardinality sequence with $m \geq 2$,
$c_1 \geq 2$, and
$c_p$ even for $1 \leq p \leq m$, and let $N = S \; \dot{\cup} \;  T \; \dot{\cup} \;  \{n\}$ be a partition of $N$. The \emph{odd cycle exclusion constraint}
\begin{equation} \label{opec}
x(A(S))+ x(A(T)) + x((T: \{n\}))- x((\{n\}:T)) \geq 0
\end{equation}
is valid for $\CCCPn$ and defines a facet of  $\CCCPn$ if and only
if ($\alpha$) $c_1=2$ and $|S|, |T| \geq \frac{c_2}{2}$, or ($\beta$) $c_1 \geq 4$ and 
$|S|,|T| \geq \frac{c_2}{2}-1$. \\[0.5em]
(j) Let $c=(c_1,\dots,c_m)$ be a cardinality sequence with $m \geq 2$,
$c_1 \geq 3$, and
$c_p$ odd for $1 \leq p \leq m$, and let $N = S \; \dot{\cup} \;  T$ be a partition of $N$. The \emph{even cycle exclusion constraint}
\begin{equation} \label{epec}
x(A(S))+ x(A(T)) \geq 1
\end{equation}
is valid for $\CCCPn$ and defines a facet of  $\CCCPn$ if and only
if $|S|, |T| \geq \frac{c_2-1}{2}$. \\[0.5em]
(k) Let $c=(c_1,\dots,c_m)$ be a cardinality sequence with $m \geq 3$, $c_1 \geq
2$, $c_m \leq n$, $n \geq 6$, and $c_{p+2}=c_{p+1}+2=c_{p}+4$ for some $2 \leq p \leq m-2$.
Moreover, let $N=P \; \dot{\cup} \;  Q \; \dot{\cup} \; \{r\}$ be a partition of $N$, with $|P|=c_p+1=c_{p+1}-1$. Then the modified
  cardinality forcing inequality \eqref{MCF} defines a facet of $\CCCPn$. 
\end{Corollary}

\begin{proof}
(a) When $n \leq 4$, the statement can be verified using a computer program.
When $c=(2,3)$ and $n \geq 5$, we apply Theorem 10 of Hartmann and
\"Ozl\"uk which says that  $x_{ij} \geq 0$ defines a facet of
$P_C^{(p)}(D_n)$ whenever $p \geq 3$ and $n \geq p+1$. Thus, there are
$n^2-2n$ 3-cycles satisfying $x_{ij} \geq 0$ at equality. Together with
Lemma \ref{L1} applied on these tight 3-cycles and any 2-cycle not
using arc $(i,j)$, we get the desired result. The remainder statements
of (a) follow by application of Theorem \ref{Tnon} and Theorem
\ref{lifting}.
\\[0.5em]
(b) First, when $c=(2,3)$ one can show along the lines of the proof to
Proposition 5 of Balas and Oosten \cite{BO} that $x(\delta^{\mbox{\scriptsize out}}(i)) \leq 1$
defines a facet of $\CCCPn$. Next, when $(2,3) \neq c \neq (2,n)$, the
degree constraint can be shown to induce a facet using  theorems
\ref{Tdegree} and \ref{lifting}. Finally, when $c=(2,n)$, 
see~\cite{KS}.
\\[0.5em]
(c) Supposing that $c=(2,3)$, the inequality (\ref{mcec}) is
dominated by the nonnegativity constraint $x_{ij} \geq 0$ for any arc
$(i,j) \in (S: N \setminus S) \cup (N \setminus S: S)$ that is neither
incident with $v$ nor with $w$. 
Next, suppose that $c=(2,n)$. Inequality (\ref{mcec}) is equivalent to the
subtour elimination constraint $x(A(S)) \leq |S|-1$ with respect to
the ATSP $P_C^{(n)}(D_n)$. Thus, we have $n^2-3n+1$ tours satisfying
(\ref{mcec}) at equality. But we have only $n-1$ tight 2-cycles, and
consequently, (\ref{mcec}) does not induce a facet.
Next, if $|S| \leq c_1-1$, then (\ref{mcec}) is the sum of the valid
inequalities $x(\delta^{\mbox{\scriptsize out}}(v)) - x((S:N \setminus S)) \leq 0$ and
$x(\delta^{\mbox{\scriptsize out}}(w)) \leq 1$. Finally, if $|N \setminus S| \leq c_1-1$,
then (\ref{mcec}) is the sum of the inequalities $x(\delta^{\mbox{\scriptsize out}}(w)) -
x((S:N \setminus S)) \leq 0$ and $x(\delta^{\mbox{\scriptsize out}}(v)) \leq 1$ (cf.
Hartmann and \"Ozl\"uk \cite[p. 162]{HO}).

Suppose that the conditions in (c) are satisfied. First, consider the
inequality (\ref{mcec}) on the polytope $Q:=\{x \in \CCCPn :
x(\delta^{\mbox{\scriptsize out}}(1))=1\}$ which is isomorphic to the path polytope $\CCPPon$. Then,  (\ref{mcec}) is equivalent to the one-sided
min-cut inequality (\ref{osmc}) which defines a facet of $Q$ by
Theorem \ref{Tosmc}. Thus, also (\ref{mcec}) defines a facet of $Q$. Now,
by application of Theorem \ref{lifting} on $Q$ and  (\ref{mcec}) we
obtain the desired result. 
(When $c_1 \geq 4$, then the statement can be proved also with Theorem
14 of Hartmann and \"Ozl\"uk \cite{HO}. 
\\[0.5em]
(d) Assuming $|S|=1$ or $|N \setminus S|=1$ implies that (\ref{cmc}) is an implicit equation. 
So, let $|S|,|N \setminus S| \geq 2$ which implies that $c_1 \geq 3$. From Theorem \ref{Tmincut} follows that
(\ref{cmc}) defines a facet of $Q:= \{x \in \CCCPn : x(\delta^{\mbox{\scriptsize out}}(i)) =1\}$, and hence, by Theorem \ref{lifting}, 
it defines also a facet of $\CCCPn$.
\\[0.5em]
(e) When $|N \setminus S| \geq c_1$, (\ref{cosmc}) is obviously not
valid. When $|N \setminus S|=1$,  (\ref{cosmc}) is the flow constraint
$x(\delta^{\mbox{\scriptsize in}}(j))- x(\delta^{\mbox{\scriptsize out}}(j)) = 0$. 
When $|S| \leq c_1-1$ and $|N \setminus S| \leq c_1-1$, (\ref{cosmc})
is the sum of the valid inequalities $x((S: N \setminus S)) \geq 1$
and $-x(\delta^{\mbox{\scriptsize out}}(j)) \geq -1$.

Suppose that $|S| \geq c_1$ and $ 2 \leq |N \setminus S| \leq
c_1-1$. Then in particular $c_1 \geq 3$ holds. For any node $i \in S$,
(\ref{cosmc}) defines a facet of $Q:= \{x \in \CCCPn :
x(\delta^{\mbox{\scriptsize out}}(i))=1\}$, by Theorem \ref{Tosmc}. Applying Theorem
\ref{lifting} we see that therefore (\ref{cosmc}) defines also a facet
of $\CCCPn$.
\\[0.5em]
(f) Since $\dim \{x \in \CCCPn : x(A)=c_i\} = \dim P_C^{(c_i)}(D_n)$,
the claim follows directly from Theorem 1 of Hartmann and \"Ozl\"uk
\cite{HO}.   
\\[0.5em]
(g)-(i) Necessity can be proved as in the corresponding part of
the proof to Theorem \ref{TCF} (\ref{TRS}, \ref{TOP}) while suffiency can
be shown by applying Theorem \ref{lifting} on Theorem \ref{TCF}
(\ref{TRS}, \ref{TOP}). 
\\[0.5em]
(j) By Theorem 15 of Hartmann and \"Ozl\"uk, \eqref{epec} defines a facet of $P_C^{(c_1)}(D_n)$. Moreover, the cardinality conditions for $S$ and $T$ ensure that there is a tight cycle of cardinality $c_2$, and hence, by Lemma \ref{L1}, \eqref{epec} defines a facet of $\CCCPn$. 
\\[0.5em]
(k) Apply Theorem \ref{lifting} on Theorem \ref{TMCF}.
\end{proof}

\subsection{Facets of the undirected cardinality constrained cycle
  polytope} 

In this section, we consider the undirected cardinality constrained
cycle polytope $\uCCCPn$ defined on the complete graph $K_n=(N,E)$,
where $c$ is a cardinality sequence with $3 \leq c_1 < \dots < c_m
\leq n$ and $m \geq 2$. It was shown in \cite{KMV} and \cite{MN2} that
$\dim P_C^{(p)}(K_n) = |E|-1$ for $3 \leq p \leq n-1$ and $n \geq
5$. Thus, it is easy to verify that
$\dim \uCCCPn = |E|=n(n-1)/2$ for all $n \geq 4$, since $m \geq
2$. Note, in case of $n=4$, $\uCCCPn = P_C(K_n)$, and by Theorem 2.3
of Bauer \cite{Bauer}, $\dim P_C(K_4)=6=|E|$. 

Facet defining inequalities for $\uCCCPn$ can be derived directly from
the inequalities mentioned in Corollary \ref{CfacetsCCCP} (b)-(h),
since these inequalities are equivalent to symmetric inequalities. A
valid inequality $cx \leq \gamma$ for $\CCCPn$ is said to be
\emph{symmetric} if $c_{ij}=c_{ji}$ holds for all $i < j$.  Due to the
flow conservation constraints, it is equivalent to a symmetric inequality if and only if the system $t_i-t_j=c_{ij}-c_{ji}$ is consistent (see Hartmann and \"Ozl\"uk~\cite{HO} and Boros et al~\cite{BHHS}). One can
show that the undirected counterpart $\sum_{1 \leq i < j \leq n}
c_{ij} y_{ij}$ of a symmetric inequality $c x \leq \gamma$ is valid for
$\uCCCPn$. Moreover, it induces a facet of $\uCCCPn$ if $cx \leq
\gamma$ induces a facet of $\CCCPn$. This follows from an argument of
Fischetti \cite{Fischetti}, originally stated for the ATSP and STSP,
which is also mentioned in
Hartmann and \"Ozl\"uk \cite{HO} in the context of directed and undirected $p$-cycle
polytopes $P_C^{(p)}(D_n)$ and $P_C^{(p)}(K_n)$. 

\begin{Corollary} \label{CfacetsuCCCP}
Let $K_n=(N,E)$ be the complete graph on $n \geq 3$ nodes and
$c=(c_1,\dots,c_m)$ a cardinality sequence with $m \geq 2$ and $c_1
\geq 3$.  Then holds:
\\[0.5em]
(a) For any $e \in E$, the nonnegativity constraint $y_e \geq
  0$ defines a facet of $\uCCCPn$ if and only if $n \geq 5$. \\[0.5em]
(b) The degree constraint $y(\delta(i)) \leq 2$ defines a facet
  of $\uCCCPn$ for every $i \in N$. \\[0.5em]
(c) Let $S$ be a subset of $N$ with $c_1 \leq |S| \leq n-c_1$, let
  $v \in S$ and $w \in N \setminus S$. Then, the two-sided min-cut inequality
\begin{equation} \label{umcec}
y(\delta(v))+y(\delta(w)) - y((S:N \setminus S)) \leq 2
\end{equation}
induces a facet of $\uCCCPn$.\\[0.5em]
(d) For any $S \subset N$ with $|S|,|N \setminus S| \leq c_1-1$, the min-cut inequality
\begin{equation} \label{ucmc}
y((S:N \setminus S)) \geq 2
\end{equation}
is valid for $\uCCCPn$ and induces a facet of $\uCCCPn$ if and only if
$|S|,|N \setminus S| \geq 2$.\\[0.5em] 
(e) Let $S$ be a subset of $N$ and $j \in N \setminus S$. The
  one-sided min-cut inequality 
\begin{equation} \label{ucosmc} 
y((S:N \setminus S)) -y(\delta(j)) \geq 0
\end{equation}
defines a facet of $\uCCCPn$ if and only if
$|S| \geq c_1$ and $ 2 \leq |N \setminus S| \leq c_1-1$.\\[0.5em]
(f) The cardinality bound $y(E) \geq c_1$ defines a facet of
  $\uCCCPn$. The cardinality bound $y(E) \leq c_m$ defines a facet of
  $\uCCCPn$ if and only if $c_m < n$.\\[0.5em]
(g) Let $W$ be a subset of $N$ with $c_p < |W| <
  c_{p+1}$ for some $p \in \{1,\dots,m-1\}$. The cardinality-forcing
  inequality
\begin{equation}
(c_{p+1}-|W|) \sum_{i \in W} y(\delta(i)) - (|W|-c_p)\sum_{i \in N
  \setminus W} y(\delta(i)) \leq 2 c_p( c_{p+1}-|W|) 
\end{equation}
defines a facet of $\uCCCPn$ if and only if $c_{p+1}-|W| \geq 2$ and $c_{p+1} < n$ or $c_{p+1}=n$ and $|W| = n-1$.\\[0.5em]
(h) Let $W$ be a subset of $N$ such that $c_p < |W| < c_{p+1}$ holds
for some $p \in \{1,\dots,m-1\}$. The cardinality-subgraph inequality
\begin{equation}  
2y(E(W)) - (|W|-c_p-1) y((W:N \setminus W))
    \leq 2 c_p
\end{equation}
 is valid for $\uCCCPn$ and induces a facet of $\uCCCPn$
if and only if $p+1 < m$ or $c_{p+1}=n = |W|+1$.\\[0.5em]
(i) Let $c=(c_1,\dots,c_m)$ be a cardinality sequence with $m \geq 2$,
$c_1 \geq 3$, and
$c_p$ odd for $1 \leq p \leq m$, and let $N = S \; \dot{\cup} \;  T$ be a partition of $N$. The even cycle exclusion constraint
\begin{equation}
y(E(S))+ y(E(T)) \geq 1
\end{equation}
is valid for $\uCCCPn$ and defines a facet of  $\uCCCPn$ if and only
if $|S|, |T| \geq \frac{c_2-1}{2}$. 
\end{Corollary}

\begin{proof}
(a) When $n \leq 5$ the statement can be verified using a computer
program. When $n \geq 6$, the claim follows from Proposition 2 of
Kovalev, Maurras, and Vax\'{e}s \cite{KMV}, Proposition 2 of Maurras and
Nguyen \cite{MN2}, and the fact that $m \geq 2$.

(b)-(i) All directed inequalities occurring in Corollary
\ref{CfacetsCCCP} (b)-(h) and (j) are equivalent to symmetric
inequalities. For example, the degree constraint $x(\delta^{\mbox{\scriptsize out}}(i)) \leq
1$ is equivalent to $x(\delta^{\mbox{\scriptsize out}}(i))+x(\delta^{\mbox{\scriptsize in}}(i)) \leq 2$. Via the
identification $y(\delta(i)) \cong x(\delta^{\mbox{\scriptsize out}}(i)) + x(\delta^{\mbox{\scriptsize in}}(i))$ we
see that $y(\delta(i)) \leq 2$ defines a facet of $\uCCCPn$ if
$x(\delta^{\mbox{\scriptsize out}}(i)) \leq 1$ defines a facet of $\CCCPn$. 

Necessity can be shown with similar arguments as 
for the directed counterparts of these inequalities. 
\end{proof}

The inequalities mentioned in Corollary \ref{CfacetsuCCCP} (a)-(c),
(e)-(g) together with the integrality constraints $y_e \in \{0,1\}$
for $e \in E$ provide a characterization of the integer points of
$\uCCCPn$. In this context note that if $|N\setminus S|=2$, the
inequalities in (e) are
equivalent to the well-known parity constraints 
$$y(\delta(j) \setminus
 \{e\}) - y_e \geq 0 \hspace{2cm} (j \in N, e \in \delta(j)) $$ 
mentioned for example in \cite{Bauer}.

The odd cycle exclusion constraints as well as the modified
cardinality forcing inequalities from Corollary \ref{CfacetsCCCP} are
not symmetric nor equivalent to symmetric inequalities. Hence, we did
not derive counterparts of these inequalities for $\uCCCPn$. Of
course, given a valid inequality $cx \leq c_0$ for $\CCCPn$, one
obtains a valid inequality $\tilde{c} y \leq 2c_0$ for $\uCCCPn$ by
setting $\tilde{c}_{ij} := c_{ij}+c_{ji}$ for $i<j$. However, it 
turns out that the counterparts of these two classes of inequalities are irrelevant for a linear description of $\uCCCPn$. 

\subsection{Facets of the undirected cardinality constrained path
  polytope} 

The undirected cardinality constrained $(0,n)$-path polytope $\uCCPPon$
is the symmetric counterpart of $\CCPPon$. Here, $K_{n+1}=(N,E)$
denotes the complete graph on node set $N=\{0,\dots,n\}$.
In the sequel we confine ourselves to the set $\CS$ of cardinality
sequences $c=(c_1,\dots,c_m)$ with $m \geq 2$, $c_1 \geq 2$, and $c
\neq (2,3)$.

\begin{Theorem}
Let $K_{n+1} =(N,E)$ be the complete graph on node set
$N=\{0,\dots,n\}$, $n \geq 4$, and let $c=(c_1,\dots,c_m) \in \CS$ be a
cardinality sequence. Then the
following holds:
\begin{enumerate}
\item[(i)] $\dim \uCCPPon=|E|-3$.
\item[(ii)] The  nonnegativity constraint $y_e \geq
  0$ defines a facet of $\uCCPPon$ if and only if $c \neq (2,n)$ or
  $c=(2,n)$ and $e$ is an internal edge. 
\end{enumerate}
\end{Theorem}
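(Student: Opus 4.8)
The plan is to prove~(i) first and then obtain~(ii) by the same scheme of arguments that is used for $\CCPPon$ in Theorems~\ref{Tnon}--\ref{Tmincut}: reduce everything to the fixed-cardinality path polytopes $P_{0,n-\mbox{\scriptsize path}}^{(k)}(K_{n+1})$ and to Lemma~\ref{L1}. For the upper bound in~(i), observe that every simple $(0,n)$-path in $K_{n+1}$ uses exactly one edge of $\delta(0)$, exactly one edge of $\delta(n)$, and --- since $c_1\ge 2$ --- never the edge $\{0,n\}$; hence $y_{0n}=0$, $y(\delta(0))=1$, $y(\delta(n))=1$ are valid, and, being linearly independent, give $\dim\uCCPPon\le|E|-3$. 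For the matching lower bound I would distinguish cases as in the proof of Theorem~\ref{T3}: if $n\ge 5$ and there is an index $p$ with $4\le c_p\le n-1$, then the undirected transcription of the codimension-one statement for directed fixed-cardinality path polytopes (Table~1 of~\cite{Stephan}, together with the fact that $y_{0n}=0$, $y(\delta(0))=1$, $y(\delta(n))=1$, $y(E)=c_p$ are the only equations) yields $\dim P_{0,n-\mbox{\scriptsize path}}^{(c_p)}(K_{n+1})=|E|-4$, so there are $|E|-3$ affinely independent incidence vectors of feasible $c_p$-paths; since $m\ge 2$ there is a feasible path of a different length $c_q$, and by Lemma~\ref{L1}(i) its incidence vector is affinely independent of all $c_p$-paths, giving $|E|-2$ affinely independent points of $\uCCPPon$. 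The remaining sequences --- which for $n=4$ exhaust $\CS$ and for $n\ge 5$ are $(2,n)$, $(3,n)$, $(2,3,n)$ --- are handled in~\cite{KS}, just as for the directed polytope.

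For the ``if'' part of~(ii) I would again pass to a fixed-cardinality polytope: for an index $p$ with $4\le c_p\le n-1$ the constraint $y_e\ge 0$ is facet defining for $P_{0,n-\mbox{\scriptsize path}}^{(c_p)}(K_{n+1})$ (the undirected counterpart of Theorem~3.1 of~\cite{Stephan}, which underlies Theorem~\ref{Tnon}), hence there are $|E|-4$ affinely independent feasible $c_p$-paths avoiding $e$; choosing in addition a feasible path of a different length that also avoids $e$ and invoking Lemma~\ref{L1}(i) raises the count to $|E|-3$, so $\{y\in\uCCPPon:y_e=0\}$ is a facet. One has to check here that such a path of different length avoiding $e$ exists; this is routine (an edge not incident with $0$ or with $n$ can always be routed around) unless $c=(2,n)$, which is precisely the case singled out in the statement and which, together with $(3,n)$ and $(2,3,n)$, is treated in~\cite{KS}.

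For the ``only if'' part of~(ii), let $c=(2,n)$ and let $e$ be a boundary edge, say $e=\{0,i\}$ with $1\le i\le n-1$; I would show that the face $F:=\{y\in\uCCPPon:y_e=0\}$ has dimension strictly less than $|E|-4$. Since every feasible path here has length $2$ or $n$, the vertices of $F$ are the incidence vectors of the two-paths $0$-$j$-$n$ with $j\neq i$ and of the Hamiltonian $(0,n)$-paths avoiding $e$, so $F=\mbox{conv}\bigl((F\cap P_1)\cup(F\cap P_n)\bigr)$, where $P_1$ is the $(n-2)$-dimensional simplex of all two-paths and $P_n:=P_{0,n-\mbox{\scriptsize path}}^{(n)}(K_{n+1})$; consequently $\dim F\le\dim(F\cap P_1)+\dim(F\cap P_n)+1$. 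Now $F\cap P_1$ is the face of $P_1$ obtained by deleting the single vertex $0$-$i$-$n$, so $\dim(F\cap P_1)\le n-3$. Moreover $\dim P_n=|E|-n-2$ --- a Hamiltonian $(0,n)$-path spans all $n+1$ nodes, so the $n+1$ degree equations together with $y_{0n}=0$ are the only ones --- and $F\cap P_n$ is a proper face of $P_n$, because some Hamiltonian $(0,n)$-path uses $e$; hence $\dim(F\cap P_n)\le|E|-n-3$, and therefore $\dim F\le(n-3)+(|E|-n-3)+1=|E|-5$. For an internal edge $e$ the same estimate gives only $\dim F\le|E|-4$, which is exactly why internal edges are not excluded from the statement.

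The step I expect to be the real obstacle is not any of these reductions but the degenerate sequences $(2,n)$, $(3,n)$, $(2,3,n)$: there the short arguments collapse, since no fixed-cardinality subpolytope of codimension~$1$ of the required facet type is available, and one must carry out in the undirected setting the same ad hoc constructions of affinely independent paths that~\cite{KS} performs for $\CCPPon$. Checking that those constructions survive the passage to the complete graph $K_{n+1}$ --- and in particular keeping track of which boundary edges behave differently in the $(2,n)$ case --- is where the bookkeeping is heaviest; everything else is the routine transcription of the directed results via Lemma~\ref{L1} and the three degree/forbidden-edge equations isolated above.
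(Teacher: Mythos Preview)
Your approach is essentially the same as the paper's: bound the dimension from above by the three equations $y_{0n}=0$, $y(\delta(0))=1$, $y(\delta(n))=1$, then get the lower bound and the facet sufficiency from the corresponding fixed-cardinality result in~\cite{Stephan} together with Lemma~\ref{L1}, deferring the degenerate sequences $(2,n),(3,n),(2,3,n)$ to~\cite{KS}. One small imprecision: the undirected dimension and nonnegativity-facet results you need are stated directly in~\cite{Stephan} (the paper cites Theorems~4.7 and~4.9 there), so you can quote them rather than ``transcribing'' the directed Theorems~3.1 and Table~1.

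Your necessity argument for~(ii) when $c=(2,n)$ and $e$ is a boundary edge is a genuine addition: the paper simply refers to~\cite{KS}, whereas you give a clean dimension count via $\dim F\le\dim(F\cap P_1)+\dim(F\cap P_n)+1$. That argument is correct---the $n-1$ two-paths are affinely independent so $P_1$ really is an $(n-2)$-simplex, $\dim P_n=|E|-n-2$ holds for $n\ge 4$ (Hamiltonian $(0,n)$-paths correspond to STSP tours through $\{0,n\}$, and $x_e\le 1$ is a facet of the STSP polytope on $\ge 5$ nodes), and $y_e\ge 0$ is indeed a proper face of $P_n$---yielding $\dim F\le |E|-5<|E|-4$. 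This is more informative than the paper's deferral. Note, however, that the ``if'' direction for $c=(2,n)$ with $e$ internal is \emph{not} covered by your fixed-$c_p$ reduction (there is no $c_p$ with $4\le c_p\le n-1$), so that case still has to go through~\cite{KS}, as you acknowledge.
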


\begin{proof}
(i) All points $y \in \uCCPPon$ satisfy the equations
\begin{eqnarray}
y_{0n} & = & 0, \label{101} \\
y(\delta(0)) & = & 1, \label{102}  \\
y(\delta(n)) & = & 1. \label{103}
\end{eqnarray}
Thus, the dimension of $\uCCPPon$ is at most $|E|-3$. 
When $4 \leq c_i <n$ for some $i \in \{1,\dots,m\}$, then the
statement is implied by Theorem 4.7 of  \cite{Stephan}, saying
that $\dim P_{0,n-\mbox{\scriptsize path}}^{(c_i)}(K_{n+1})=|E|-4$, and the
fact that $m \geq 2$. When $c \in \{(2,n), (3,n), (2,3,n)$,
see~\cite{KS}.  

(ii) When $4 \leq c_i < n$ for some $i \in \{1,\dots,m\}$, then the claim
follows from Theorem 4.9 of  \cite{Stephan} and the fact that
$m \geq 2$. Otherwise, $c=(2,n)$, $c=(3,n)$, or
$c=(2,3,n)$. Then see~\cite{KS}. 
\end{proof}

The concept of symmetric inequalities can be used to derive facet
defining inequalities for $\uCCPPon$ from those for $\CCPPon$. A valid
inequality $cx \leq c_0$ 
for the directed path polytope $\CCPPon$ is said to be \emph{\symmetric}
 if $c_{ij}=c_{ji}$ for all $1 \leq i < j \leq n-1$. It is equivalent to a \symmetric inequality if and only if the system $t_i-t_j=c_{ij}-c_{ji}$ for $1 \leq i <j \leq n-1$ is consistent.
In \cite{Stephan} it was shown that the undirected counterpart
$\bar{c}y \leq c_0$ of a \symmetric inequality $cx
\leq c_0$ (obtained by setting
$\bar{c}_{0i}=c_{0i},\bar{c}_{in}=c_{in}$ for all internal nodes $i$
and $\bar{c}_{ij}=c_{ij}=c_{ij}$ for all $1 \leq i < j \leq n-1$) is
facet defining for $P_{0,n-\mbox{\scriptsize path}}^{(p)}(K_{n+1})$ if
$cx \leq c_0$ is facet defining for $P_{0,n-\mbox{\scriptsize
    path}}^{(p)}(\tdn)$. The same holds for $\CCPPon$ and $\uCCPPon$.

\begin{Corollary} \label{PfacetsuCCCP}
Let $K_{n+1}=(N,E)$ be the complete graph on node set
$N=\{0,\dots,n\}$ with $n \geq 4$, and let $c=(c_1,\dots,c_m) \in \CS$
be a cardinality sequence. Then we have:
\\[0.5em]
(a) The degree constraint $y(\delta(i)) \leq 2$ defines a facet
  of $\uCCPPon$ for every node $i \in N \setminus \{0,n\}$ unless $c=(2,n)$. \\[0.5em]
(b) Let $S$ be a subset of $N$ with $0,n \in S$ and $|S| \leq
  c_1$. Then, the min-cut inequality
\begin{equation}
y((S:N \setminus S)) \geq 2
\end{equation}
induces a facet of $\uCCPPon$ if and only if $|S| \geq 3$ and $|V
\setminus S| \geq 2$. \\[0.5em]
(c) Let $S \subset N$ with $0,n \in S$, $j \in N \setminus S$, and $|S| \geq c_1+1$. Then, the one-sided min-cut inequality 
\begin{equation}
y((S:N \setminus S))- y(\delta(j))\geq 0
\end{equation}
is valid for $\uCCPPon$ and induces a facet of $\uCCPPon$ if and only if
$|N \setminus S| \geq 2$. \\[0.5em]
(d) The cardinality bound $y(E) \geq c_1$ defines a facet of
  $\uCCPPon$ if and only if $c_1 \geq 4$. The cardinality bound $y(E)
  \leq c_m$ defines a facet of $\uCCPPon$ if and only if $c_m < n$.\\[0.5em]
(e) Let $W$ be a subset of $N$ with $0,n \in W$ and $c_p < |W|-1 <
  c_{p+1}$ for some $p \in \{1,\dots,m-1\}$. The cardinality-forcing
  inequality
\begin{equation}
(c_{p+1}-|W|+1) \sum_{i \in W} y(\delta(i)) - (|W|-c_p-1)\sum_{i \in N
  \setminus W} y(\delta(i)) \leq 2 c_p( c_{p+1}-|W|+1) 
\end{equation}
defines a facet of $\uCCPPon$ if and only if $c_{p+1}-|W|+1 \geq 2$ and
$c_{p+1} < n$ or $c_{p+1}=n$ and $|W| = n$. \\[0.5em]
(f) Let $W$ be a subset of $N$ such that $0,n \in W$ and $c_p <
  |W|-1 < c_{p+1}$ for some $p \in \{1,\dots,m-1\}$. The
  cardinality-subgraph inequality
\begin{equation}
    2y(E(W)) - (|W|-c_p-2) y((W:N \setminus W))
    \leq 2 c_p
\end{equation}
 is valid for $\uCCPPon$ and induces a facet of $\uCCPPon$
if and only if $p+1 < m$ or $c_{p+1}=n = |W|$. \\[0.5em]
(g) Let $c=(c_1,\dots,c_m)$ be a cardinality sequence with $m \geq 2$,
$c_1 \geq 2$, and
$c_p$ even for $1 \leq p \leq m$, and let $N = S \; \dot{\cup} \;  T$ be a partition of $N$
with $0 \in S$, $n \in T$. The odd path exclusion constraint
\begin{equation}
y(E(S))+ y(E(T)) \geq 1
\end{equation}
is valid for $\uCCPPon$ and defines a facet of  $\uCCPPon$ if and only
if (i) $c_1=2$ and $|S|, |T| \geq \frac{c_2}{2}+1$, or (ii) $c_1 \geq 4$ and 
$|S|,|T| \geq \frac{c_2}{2}$. \\[0.5em]
(h) Let $c=(c_1,\dots,c_m)$ be a cardinality sequence with $m \geq 2$,
$c_1 \geq 3$, and
$c_p$ odd for $1 \leq p \leq m$, and let $N = S \; \dot{\cup} \;  T$ be a partition of $N$
with $0,n \in S$. The even path exclusion constraint
\begin{equation}
y(E(S))+ y(E(T)) \geq 1
\end{equation}
is valid for $\uCCPPon$ and defines a facet of  $\uCCPPon$ if and only
if ($\alpha$) $c_1=3$, $|S|-1 \geq \frac{c_2+1}{2}$, and $|T| \geq
\frac{c_2-1}{2}$, or ($\beta$) $c_1 \geq 5$ and $\min (|S|-1,|T|) \geq
\frac{c_2-1}{2}$.\hfill $\Box$
\end{Corollary}
As already mentioned, the modified cardinality forcing inequalities
\eqref{MCF} are not equivalent to \symmetric inequalities.

\newpage

\section{Concluding remarks}
Restricting the set of feasible solutions of a combinatorial
optimization problem to those that satisfy some specified cardinality
constraints always  can be done by adding the corresponding
cardinality forcing inequalities inherited from the polytope
associated with the respective cardinality homogeneous set system. However, as we have demonstrated at the example of paths and cycles, one may end with rather weak formulations unless this is done carefully: Imposing the restrictions on the number of vertices leads to formulations with facet defining inequalities, while the straight-forward approach using the arcs does not result in strong inequalities.

It would be interesting to see whether this is similar for cardinality
restricted versions of other optimization problems. Moreover, we believe
that there should be other interesting situations where knowledge on a
master polyhedron (like the cardinality homogeneous set systems polyhedron) and on a polyhedron associated with particular combinatorial structures (like paths and cycles) can be brought into fruitful interplay.

\newpage
\noindent
\small{E-mail address: \texttt{kaibel@ovgu.de}}\\
\small{E-mail address: \texttt{stephan@math.tu-berlin.de}}

\end{document}